\definecolor{darkbrown}{rgb}{.5,.1,.1} 
\title[Critical Groups of SRGs]{A Note on the Critical Groups of Strongly Regular Graphs and Their Generalizations}
\author{Kenneth Hung}
\address{Kenneth Hung: Meta, Menlo Park, CA, USA}
\email{me@kenhung.me}
\author{Chi Ho Yuen}
\address{Chi Ho Yuen:
Department of Mathematics, University of Oslo, Oslo, Norway
}
\email{chihy@math.uio.no}
\def\K{\mathcal{K}}
\numberwithin{equation}{section}
\theoremstyle{definition}
\newtheorem{theorem}{Theorem}[section]
\newtheorem{lemma}[theorem]{Lemma}
\newtheorem{definition}[theorem]{Definition}
\newtheorem{remark}{Remark}
\newtheorem{proposition}[theorem]{Proposition}
\newtheorem{example}[theorem]{Example}
\newcommand{\cn}{\operatorname{cn_\pm}}
\newcommand{\coker}{\operatorname{coker}}
\newcommand{\row}{\operatorname{row}}
\begin{document}

\begin{abstract}
We determine the maximum order of an element in the critical group of a strongly regular graph, and show that it achieves the spectral bound due to Lorenzini.
We extend the result to all graphs with exactly two non-zero Laplacian eigenvalues, and study the signed graph version of the problem.
We also study the monodromy pairing on the critical groups, and suggest an approach to study the structure of these groups using the pairing.
\end{abstract}

\maketitle

\vspace{-5mm}
\section{Introduction}

The {\em critical group} $\K(G)$ (also known as sandpile group, Jacobian) of a graph $G$ is a finite abelian group whose cardinality equals the number of spanning trees of $G$.
It is an interesting algebraic invariant with connections to many fields, including combinatorics (notably the {\em chip-firing game}), tropical and arithmetic  geometry, and probability.
We recommend \cite{SandpileBook,ChipBook} to the reader for more on these connections.

As the critical group can be defined as the torsion part of the cokernel of the graph Laplacian, it is natural to inquire about the relation between the spectrum of the Laplacian and the critical group.
For example, a direct corollary of Kirchhoff's Matrix--Tree Theorem is that $|\K(G)|$ can be deduced from the spectrum.
In general, while the Laplacian spectrum of a graph does not determine its critical group, it does provide extra information.
In particular, Lorenzini proved the following:

\begin{theorem} \cite[Proposition~2.6]{Lorenzini} \label{thm:Lorenzini}
Let $M\neq 0$ be an $n\times n$ diagonalizable integer matrix and let $\theta_1,\ldots,\theta_t$ be the distinct non-zero eigenvalues of $M$. Then every torsion element of $\coker M$ is killed by $\prod \theta_i$.
\end{theorem}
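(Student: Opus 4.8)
The plan is to exploit the minimal polynomial of $M$. Since $M$ is diagonalizable, its minimal polynomial factors into distinct linear factors over its spectrum; writing $p(x)=\prod_{i=1}^{t}(x-\theta_i)$ for the non-zero part, the minimal polynomial is either $p(x)$ (when $0$ is not an eigenvalue) or $x\,p(x)$ (when it is). First I would record that $p(x)\in\ZZ[x]$: the characteristic polynomial $\det(xI-M)$ is monic with integer coefficients, the minimal polynomial is a monic rational divisor of it and hence lies in $\ZZ[x]$ by Gauss's lemma, and dividing out the factor $x$ when present preserves integrality since it merely removes a vanishing constant term. Expanding $p(x)=x^t+c_{t-1}x^{t-1}+\cdots+c_1x+c_0$, the constant term is $c_0=p(0)=(-1)^t\prod_i\theta_i$, so in particular $\prod_i\theta_i\in\ZZ$ and multiplication by it makes sense on $\coker M$.

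The crux is the single identity $M\,p(M)=0$, valid in both cases: if $0$ is not an eigenvalue this reads $M\cdot 0$, and if it is, then $M\,p(M)$ is precisely the minimal polynomial evaluated at $M$. I would use this to kill $p(M)$ on every torsion class. Let $v\in\ZZ^n$ represent a torsion element of $\coker M$, so that $kv=Mw$ for some positive integer $k$ and some $w\in\ZZ^n$. Applying $p(M)$ and using that polynomials in $M$ commute gives $k\,p(M)v=p(M)Mw=\bigl(M\,p(M)\bigr)w=0$. Since $\ZZ^n$ is torsion-free, this forces $p(M)v=0$. This is the step I expect to be the heart of the argument, and the only place where the torsion hypothesis on $v$ and the vanishing of the minimal polynomial genuinely interact.

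Finally I would extract the constant term. From $p(M)v=0$ we read off
\[
c_0\,v=-\bigl(M^{t}+c_{t-1}M^{t-1}+\cdots+c_1M\bigr)v=-M\bigl(M^{t-1}+c_{t-1}M^{t-2}+\cdots+c_1 I\bigr)v\in M\ZZ^n,
\]
since the matrix in the large parentheses has integer entries. Because $c_0=(-1)^t\prod_i\theta_i$ and multiplication by $c_0$ and by $\prod_i\theta_i$ annihilate the same classes, this shows $\bigl(\prod_i\theta_i\bigr)v=0$ in $\coker M$, as desired. I would remark that when $0$ is not an eigenvalue the very same computation yields $c_0\,v\in M\ZZ^n$ for \emph{every} $v$, so $\prod_i\theta_i$ in fact annihilates all of $\coker M$, not merely its torsion subgroup; the restriction to torsion elements is needed only to handle the kernel direction arising from the eigenvalue $0$.
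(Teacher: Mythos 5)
Your proof is correct. The paper does not reprove this statement---it is quoted from Lorenzini's article---and your argument via the minimal polynomial (integrality of $p(x)$ by Gauss's lemma, the identity $M\,p(M)=0$, killing $p(M)$ on torsion classes using torsion-freeness of $\ZZ^n$, and extracting the constant term) is essentially the standard proof given in that reference, so there is nothing to flag.
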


Intuitively, the theorem is more powerful when $M$ only has few distinct eigenvalues.
In the case of graph Laplacians, the only graphs with a unique non-zero eigenvalue are the complete graphs, but the family of graphs with exactly two distinct non-zero eigenvalues is already very interesting and includes {\em strongly regular graphs} as prominent examples, as well as other graphs from design theory \cite{DH_2LE}.
We are able to show that Lorenzini's bound is tight for these graphs with a small list of exceptions.

\begin{theorem} \label{thm:main}
Let $G$ be a graph that has exactly two distinct non-zero Laplacian eigenvalues and is neither a complete bipartite graph $K_{m,m}$ ($m\geq 2$) nor a star $K_{1,p}$ ($p\geq 2$).
Then the exponent of $\K(G)$ is exactly the product of the two distinct non-zero eigenvalues; the exponents of $\K(G)$ in the two exceptional cases differ from the spectral bound by a factor of $2$ and $p+1$, respectively.
\end{theorem}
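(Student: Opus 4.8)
The plan is to realize the critical group inside the zero-sum lattice and to detect its exponent prime-by-prime via the monodromy pairing, thereby reducing the whole statement to a single congruence condition on an integer matrix. Write $L$ for the Laplacian of $G$; as $G$ is connected, $\ker L=\langle\mathbf 1\rangle$ and $L$ is diagonalizable with eigenvalues $0,\theta_1,\theta_2$. First I would observe that $(L-\theta_1 I)(L-\theta_2 I)$ vanishes on the two non-zero eigenspaces and acts as $\theta_1\theta_2$ on $\ker L$, so $(L-\theta_1 I)(L-\theta_2 I)=\tfrac{\theta_1\theta_2}{n}J$, with $J$ the all-ones matrix. Setting $\Lambda=\{x\in\ZZ^n:\sum_i x_i=0\}$ and $N=\theta_1\theta_2$, one has $\operatorname{im}L\subseteq\Lambda$ and $\K(G)\cong\Lambda/L\ZZ^n$. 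Since $Jx=0$ for $x\in\Lambda$, the identity yields $Nx=L\big((\theta_1+\theta_2)x-Lx\big)\in L\ZZ^n$, so $N\Lambda\subseteq L\ZZ^n$ and the exponent of $\K(G)$ divides $N$, recovering Theorem~\ref{thm:Lorenzini} here. It then remains to show that no proper divisor of $N$ annihilates $\K(G)$, i.e. that $\tfrac{N}{p}\Lambda\not\subseteq L\ZZ^n$ for each prime $p\mid N$.

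Next I would introduce $B=(\theta_1+\theta_2)I-L=A+\operatorname{diag}(\theta_1+\theta_2-d_i)$ and the bilinear form $Q(x,y)=x^\top B y$. Taking $z=(\theta_1+\theta_2)x-Lx$, so that $Nx=Lz$, the monodromy pairing $\langle\,\cdot\,,\cdot\,\rangle\colon\K(G)\times\K(G)\to\QQ/\ZZ$ is given by $\langle[x],[y]\rangle=\tfrac1N z^\top y=\tfrac{Q(x,y)}{N}\bmod\ZZ$, using $L=L^\top$. This pairing is perfect, so the exponent of $\K(G)$ equals the maximal order in $\QQ/\ZZ$ of a value $\tfrac{Q(x,y)}{N}$. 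The $p$-part of such an order is full, namely $p^{v_p(N)}$, for suitable $x,y$ exactly when $p\nmid Q(x,y)$ for some $x,y\in\Lambda$. This gives the clean criterion I would build everything on: the exponent of $\K(G)$ equals $N$ if and only if, for every prime $p\mid N$, the form $Q$ is not identically $0$ modulo $p$ on $\Lambda$.

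I would then analyze the criterion. Since every column of $B$ has coordinate sum $\theta_1+\theta_2$, the vanishing $Q\equiv 0\pmod p$ on $\Lambda$ is equivalent to $B(\Lambda\otimes\FF_p)\subseteq\langle\mathbf1\rangle$, i.e. to the columns of $B$ being pairwise congruent modulo $\langle\mathbf 1\rangle$ mod $p$. When $p\nmid n$ we have $\langle\mathbf1\rangle\cap\Lambda\otimes\FF_p=0$, so the condition forces all columns of $B$ equal mod $p$; reading the off-diagonal (adjacency) entries this makes every vertex adjacent to all others or to none, i.e. $G$ complete or empty, both excluded, so $Q\not\equiv0$ automatically. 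The substantive case is $p\mid n$, where columns may differ by multiples of $\mathbf1$. Here I would feed in the structure of graphs with two non-zero Laplacian eigenvalues — the regular ones being strongly regular (including the balanced complete multipartite $K_{t\times m}$, for which the column differences $\mathbb 1_{P_a}-\mathbb 1_{P_b}$ are proportional to $\mathbf 1$ only when $t=2$ and $p=2$), and the non-regular ones being the design-type graphs of \cite{DH_2LE} — and prove that the column-congruence condition is triggered for some $p\mid N$ only by $K_{m,m}$ (exactly at $p=2$) and by the stars $K_{1,p}$.

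Finally, for the two exceptional families the exponent would be pinned down directly: $K_{1,p}$ is a tree, so $\K=0$ and the exponent $1$ falls short of $N=p+1$ by the factor $p+1$; and a Smith-normal-form computation gives $\K(K_{m,m})\cong\ZZ/m^2\oplus(\ZZ/m)^{2m-4}$, of exponent $m^2$, short of $N=2m^2$ by the factor $2$. The hard part will be the classification in the previous paragraph: converting the column-congruence condition into a usable combinatorial statement and verifying, across the whole regular-and-non-regular family, that only $K_{m,m}$ and $K_{1,p}$ satisfy it. The regular case is controlled by strongly-regular parameters, but the non-regular case appears to genuinely need the explicit description of these graphs rather than a purely spectral argument, and that is where I expect the main difficulty to lie.
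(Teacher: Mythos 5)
Your reduction is correct and is, in disguise, the same algebraic identity the paper exploits: your vector $f=B({\bf e}_u-{\bf e}_v)$ with $B=(\theta_1+\theta_2)I-L$ is exactly the coefficient vector of the paper's explicit decomposition of $n\mu({\bf e}_u-{\bf e}_v)$ into rows of $L$ (e.g.\ for an SRG, $B=(k+\mu-\lambda)I+A$, giving the coefficients $k+\mu-\lambda-1$ on $u,v$ and $\pm1$ on the private neighbors). The identity $(L-\theta_1I)(L-\theta_2I)=\mu J$, the formula $\langle[x],[y]\rangle=Q(x,y)/N$, and the equivalence ``$Q\equiv 0 \pmod p$ on $\Lambda$ $\iff$ the columns of $B$ are pairwise congruent modulo $\langle\mathbf 1\rangle$ over $\FF_p$'' are all right, and the $p\nmid n$ case is correctly dispatched. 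One small caution: you invoke perfectness of the monodromy pairing, which the paper's cited proposition does not supply (it is true, but needs a reference, e.g.\ Shokrieh or Bosch--Lorenzini); in fact you do not need it, since for the lower bound you only use that a pairing value of order $p^{v_p(N)}$ forces an element of that order, and one can also read the criterion directly from $\tfrac{N}{p}x\in L\ZZ^n\iff Bx\in p\ZZ^n+\ZZ\mathbf 1$.

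The genuine gap is that the decisive step --- showing that for $p\mid n$ the column-congruence condition holds only for $K_{m,m}$ (at $p=2$) and $K_{1,p}$ --- is announced but not carried out; you explicitly defer it as ``the hard part,'' and without it the theorem is not proved for any graph with $p\mid n$ for some $p \mid N$ (which is every graph in the family, since $N=n\mu$). This is precisely the content the paper supplies via its case analysis, Lemma~\ref{lem:2LE_no_w}, and the choice of the vertices $w',w''$. I would add that your worry about needing the full structure theory of non-regular two-eigenvalue graphs is misplaced: the condition itself is extremely rigid. For odd $p$ the values $A_{ki}-A_{kj}\in\{-1,0,1\}$ are distinct mod $p$, so each pair $(i,j)$ has all other vertices adjacent to both/neither or all adjacent to exactly one of them, and for $p=2$ the same dichotomy holds; a short argument on connected graphs with $n\ge 4$ (the ``both/neither'' classes form cliques or independent sets, and at most two classes can coexist) forces $G\in\{K_n,K_{m,m},K_{1,p}\}$ with no spectral input at all. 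So the plan is viable and arguably cleaner than invoking \cite{DH_2LE}, but as written the classification --- i.e.\ the actual proof of the theorem --- is missing, and the two exceptional computations at the end are asserted rather than derived (though both are easy and agree with the paper).
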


Unlike many other works in this direction (see Section~\ref{sec:review} for a survey of related works) that are based on purely (linear) algebraic arguments, we make use of the regularities of these graphs in a combinatorial way and produce explicit elements of the group that achieve the bound. This may help future combinatorial exploration on this topic.

Indeed, as a by-product of the proof of Theorem~\ref{thm:main}, we provide a concise description of the {\em monodromy pairing} on a critical group of a strongly regular graph in Section~\ref{sec:pairing}.
We further outline an approach to relate the structure of the graph with the (local) structure of its critical group via this pairing, and prove some preliminary results along this direction.

We also study the parallel problem for {\em signed graphs}, which are graphs with signs attached to their edges.
They were introduced by Harary \cite{harary}, and further developed by Zaslavsky and others \cite{zaslavsky,zaslavsky2}.
Signed graphs have their versions of Laplacians and critical groups, hence we can also ask whether the bound in Theorem~\ref{thm:Lorenzini} is tight for particular signed graphs.
We prove a result analogous to Theorem~\ref{thm:main} for signed graphs.

\begin{theorem} \label{thm:SG}
Let $G_\sigma$ be an unbalanced signed graph with exactly two distinct Laplacian eigenvalues.
Then the exponent of $\K(G_\sigma)$ is exactly the product of the two eigenvalues.
\end{theorem}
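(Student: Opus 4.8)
The plan is to determine the exponent by a direct computation with the full cokernel, exploiting that for an unbalanced signed graph the Laplacian $L := L(G_\sigma)$ is nonsingular. Since $G_\sigma$ is unbalanced, $0$ is not a Laplacian eigenvalue, so both distinct eigenvalues $\theta_1,\theta_2$ are nonzero, $L$ is invertible, and $\K(G_\sigma) = \coker L$ is the entire finite cokernel of order $\det L$. The upper bound, that the exponent of $\K(G_\sigma)$ divides $N := \theta_1\theta_2$, is then immediate from Theorem~\ref{thm:Lorenzini}, since $L$ is symmetric (hence diagonalizable) with distinct nonzero eigenvalues $\theta_1,\theta_2$.

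The next step is to convert the two-eigenvalue hypothesis into an integral matrix identity. Because $L$ has exactly two distinct eigenvalues, its minimal polynomial is $(x-\theta_1)(x-\theta_2) = x^2 - (\theta_1+\theta_2)x + N$. The eigenvalues $\theta_1,\theta_2$ are real algebraic integers, and they are either both rational—hence integers—or a Galois-conjugate pair; in either case $s := \theta_1+\theta_2$ and $N = \theta_1\theta_2$ lie in $\ZZ$. Setting $M := sI - L \in \ZZ^{n\times n}$, the minimal-polynomial relation yields $LM = ML = NI$, and therefore $L^{-1} = \tfrac1N M$.

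The heart of the argument is a divisibility criterion for the exponent. For a prime $q \mid N$, one has $(N/q)\,\K(G_\sigma) = 0$ if and only if $(N/q)\,\ZZ^n \subseteq L\,\ZZ^n$, which—since $L$ is invertible—holds if and only if the matrix $(N/q)L^{-1} = \tfrac1q M$ has all entries in $\ZZ$, i.e. if and only if $q$ divides every entry of $M$. Consequently the exponent of $\K(G_\sigma)$ equals $N$ precisely when no prime dividing $N$ divides all entries of $M$; equivalently, when the gcd of the entries of $M$ is coprime to $N$.

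Finally I would observe that this criterion holds automatically. An unbalanced signed graph contains a negative cycle, hence at least one edge; for such an edge $ij$ the off-diagonal entry is $M_{ij} = -L_{ij} = \pm 1$, so the gcd of the entries of $M$ is $1$ and is coprime to $N$. Thus the exponent of $\K(G_\sigma)$ is exactly $N = \theta_1\theta_2$, with no exceptions. The one point that needs care—and the part I would expect to be the real content rather than an obstacle—is the clean passage from the spectral hypothesis to the integral identity $LM = NI$ (the integrality of $s$ and $N$ together with the nonsingularity of $L$); once this is in place, the exceptional cases of Theorem~\ref{thm:main} disappear precisely because nonsingularity lets us identify $\K(G_\sigma)$ with the full cokernel and invert $L$, which is exactly what makes the unit-entry argument go through.
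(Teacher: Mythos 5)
Your proof is correct, and it takes a genuinely different route from the paper's. You argue purely linear-algebraically: unbalancedness makes $L$ invertible and $\K(G_\sigma)=\ZZ^n/L\ZZ^n$ the full (finite) cokernel; diagonalizability with exactly two eigenvalues gives the minimal polynomial $x^2-sx+N$, which is integral (Gauss's lemma, or the algebraic-integer argument you give), hence the integer identity $L(sI-L)=NI$; and the exponent criterion reduces to whether some prime $q\mid N$ divides every entry of $M:=sI-L=NL^{-1}$, which fails because $M$ has off-diagonal entries $\pm1$ at every edge of the (nonempty) graph. Each equivalence checks out, in particular $(N/q)\K(G_\sigma)=0$ iff $(N/q)L^{-1}$ is integral, and a divisor of $N$ not dividing any $N/q$ must equal $N$. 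The paper instead proceeds combinatorially: using the regularity in Proposition~\ref{prop:SG_para}, it writes $\theta_1\theta_2[{\bf e}_u-{\bf e}_v]$ (or $[{\bf e}_u+{\bf e}_v]$) explicitly as an integer combination of rows of $L$ with some coefficient equal to $\pm1$, via a three-way case split with special handling of signed complete graphs. Your argument is shorter, avoids all case analysis, and in fact gives slightly more: since the $j$-th column of $M$ contains a unit entry for every vertex $j$, every class $[{\bf e}_j]$ already has order exactly $N$. What it buys less of is the explicit row decomposition, which the paper reuses in Section~\ref{sec:pairing} to compute the monodromy pairing; and your closing observation is exactly right --- invertibility of $L$ is what removes the exceptional cases that appear in Theorem~\ref{thm:main}, where the analogous adjugate-style argument would have to contend with the kernel of $L$.
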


These signed graphs generalize {\em regular two-graphs}, and are of special interest recently for their applications in the proof of the sensitivity conjecture \cite{Huang} and construction of line systems in Euclidean space \cite{Stanic2}.
Moreover, a family of decorated graphs known as {\em Adinkras}, introduced by physicists to encode special supersymmetry algebras and Clifford algebras (or representations thereof) \cite{rA,Iga_Clifford}, can be shown to have exactly two distinct Laplacian eigenvalues as well \cite{IKKY}.

\subsection{Related Works} \label{sec:review}

Lorenzini used strongly regular graphs to illustrate Theorem~\ref{thm:Lorenzini} in \cite{Lorenzini}; he also established lower bounds for the exponent in terms of the Laplacian spectrum \cite[Proposition~2.11]{Lorenzini}, which in general does not match the upper bound even in the setting of this paper.
Ducey et al. \cite{DDEMPSV} were able to determine the $p$-Sylow subgroups of the critical groups of strongly regular graphs when the eigenvalues are {\em integral} and satisfy one of several $p$-divisibility conditions.
They also described several inequalities involving $e_i$'s, the number of $\mathbb{Z}/p^i\mathbb{Z}$ summands in the primary decomposition of $\K(G)$.
See Example~\ref{ex:Cleb} for a comparison between these works and ours.

We also note that, besides results that solely depend on the parameters of strongly regular graphs, there are many other works that study specific subclasses of strongly regular graphs \cite{CSX_Paley, DGW_Rook, TS_Polar}.

Zaslavsky formulated a Matrix--Tree Theorem for signed graphs that provides a combinatorial meaning of $|\K(G_\sigma)|$\cite{zaslavsky}; the tropical interpretation of these groups was studied by Len and Zakharov \cite{LZ_Prym}.
There are fewer works on the structure of critical groups of signed groups in the literature compared with ordinary graphs.
Reiner and Tseng related critical groups of signed graphs to those of ordinary graphs via a short exact sequence, and computed some examples using it \cite{RT_Signed}.
Recently, the second author and his collaborators studied the critical groups of Adinkras \cite{IKKY}.
They observed that, following from a stronger result that was proven using the extra structure of Adinkras, the bound in Theorem~\ref{thm:Lorenzini} is tight for Adinkras.
A motivation for proving Theorem~\ref{thm:SG} is to extend this observation to its natural generality.

\section{Preliminaries} \label{sec:prelim}

Unless otherwise specified, all graphs and signed graphs are finite, simple, and connected.
The number of vertices of a graph is denoted by $n$, and we often identify the vertex set of the graph with $\{1,\ldots,n\}$; we denote by ${\bf e}_1,\ldots,{\bf e}_n$ the standard basis of $\mathbb{Z}^n$.
For two sets $A,B$, denote by $A\triangle B$ the symmetric difference of the sets, which consists of elements that belong to exactly one of the sets.

\begin{definition}
The {\em Laplacian} of a graph $G$ is the matrix $L:=D-A$, where $D$ is the diagonal matrix whose entries are the vertex degrees, and $A$ is the adjacency matrix. It is easy to see that $L$ is symmetric (hence diagonalizable) and has a simple eigenvalue 0.

The {\em critical group} $\K(G)$ of the graph is the torsion part of the cokernel of $L$ over ${\mathbb Z}$, or equivalently, the quotient group $\{{\bf u}\in\mathbb{Z}^n:\sum_i {\bf u}_i = 0\}/\row_{\mathbb Z} L$.
\end{definition}

We recall the definition of strongly regular graphs and some of their basic properties.

\begin{definition} \label{def:SRG}
A {\em $(n,k,\lambda,\mu)$-strongly regular graph} (SRG) is a $k$-regular graph in which any two adjacent (respectively, non-adjacent) vertices have exactly $\lambda$ (respectively, $\mu$) common neighbors.
\end{definition}

\begin{proposition} \cite[Chapter~21]{ACIC} \label{prop:SRG_para}
If $G$ is a $(n,k,\lambda,\mu)$-SRG, then $(n-k-1)\mu=k(k-\lambda-1)$.
If this quantity is 0, then $G$ is the complete graph $K_n$ (or $pK_m$, the disjoint union of $p$ copies of $K_m$, if we allow disconnected graphs).
If the quantity is non-zero but for some edge $uv$, every vertex is adjacent to at least one of $u,v$, then $G$ is a complete multipartite graph $K_{m,m,\ldots,m}$ with equal parts.\footnote{In most literature, these trivial cases are excluded from the definition of SRGs, but in order to have a complete classification in Theorem~\ref{thm:main}, we include them here.}
\end{proposition}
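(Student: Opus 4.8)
The plan is to treat the three assertions in turn, all by elementary double counting together with the fact that $\lambda,\mu,k,n$ are global parameters of the graph.

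For the identity, I would fix a vertex $v$ and count in two ways the edges joining the $k$ neighbors of $v$ to the $n-k-1$ non-neighbors of $v$. On the one hand, each neighbor $w$ of $v$ is adjacent to $v$ and to exactly $\lambda$ further neighbors of $v$ (the common neighbors of the adjacent pair $v,w$), so $k-1-\lambda$ of its $k$ edges lead to non-neighbors of $v$; this gives $k(k-\lambda-1)$ such edges in total. On the other hand, each non-neighbor $x$ of $v$ has exactly $\mu$ neighbors among the neighbors of $v$, giving $(n-k-1)\mu$ such edges. Equating the two counts yields $(n-k-1)\mu=k(k-\lambda-1)$.

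For the second assertion, I would note that the common value being $0$ forces, via the factor $(n-k-1)\mu$, either $n=k+1$ or $\mu=0$. If $n=k+1$ then $G$ is $k$-regular on $k+1$ vertices, hence $K_n$. If $\mu=0$ then no two non-adjacent vertices share a neighbor, so whenever $x\sim y\sim z$ with $x\neq z$ we must have $x\sim z$ (otherwise $y$ would be a common neighbor of the non-adjacent pair $x,z$). Thus the relation ``equal or adjacent'' is transitive, hence an equivalence relation, and every connected component is a clique; $k$-regularity forces each clique to have size $k+1$, giving $pK_{k+1}$.

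The third assertion is the main work, and I expect the only real obstacle to be packaging it cleanly; I would reduce it to the second assertion by passing to the complement $\bar G$, which is again strongly regular. The hypothesis that every vertex is adjacent to at least one endpoint of the edge $uv$ says precisely that $u$ and $v$ have \emph{no} common neighbor in $\bar G$; since $uv$ is a non-edge of $\bar G$ and every non-adjacent pair of an SRG shares the same number $\bar\mu$ of neighbors, this gives $\bar\mu=0$. Applying the second assertion to $\bar G$ shows $\bar G$ is a disjoint union of complete graphs, necessarily of equal size by regularity, so $G=K_{m,\ldots,m}$ with equal parts. Alternatively one can argue directly: a one-line inclusion--exclusion count shows the hypothesis is equivalent to $\lambda=2k-n$, whence for \emph{every} edge $xz$ no vertex avoids both $x$ and $z$, which makes non-adjacency an equivalence relation whose classes are exactly the parts. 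Throughout, the ``quantity non-zero'' hypothesis is what rules out the degenerate boundary cases and guarantees that the resulting parts are genuinely nontrivial.
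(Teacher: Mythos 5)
Your proposal is correct. Note that the paper does not prove this proposition at all --- it is quoted with a citation to van Lint and Wilson --- so there is no in-paper argument to compare against; your double count of the edges between $N(v)$ and the non-neighbors of $v$ is exactly the classical derivation of $(n-k-1)\mu=k(k-\lambda-1)$ found in that reference, and your handling of the two degenerate cases (via $\mu=0$ making ``equal or adjacent'' transitive, and via the complement $\bar G$ with $\bar\mu=0$, or equivalently the inclusion--exclusion identity $|N(u)\cup N(v)|=2k-\lambda=n$) is the standard and complete treatment. The only things worth making explicit in a written version are the routine facts you assert in passing: that the complement of an SRG is again an SRG with $\bar\mu=n-2k+\lambda$, and that $n\le 2k$ under the covering hypothesis, which guarantees at least two parts.
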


\begin{proposition} \cite[Section~3]{Lorenzini} \label{prop:SRG_eigen}
The Laplacian of a non-complete $(n,k,\lambda,\mu)$-SRG has exactly two distinct non-zero eigenvalues, whose product is $n\mu$.
\end{proposition}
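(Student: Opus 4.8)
The plan is to pass from the Laplacian $L = kI - A$ of the $k$-regular graph $G$ to its adjacency matrix $A$, and to read off the whole spectrum from the single quadratic relation that $A$ satisfies.

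First I would record the basic SRG identity. Counting walks of length two between two vertices, distinguishing whether they coincide, are adjacent, or are non-adjacent, yields
\[
A^2 = kI + \lambda A + \mu(J - I - A),
\]
equivalently $A^2 - (\lambda - \mu)A - (k - \mu)I = \mu J$, where $J$ is the all-ones matrix. Since $G$ is $k$-regular we have $A\mathbf{1} = k\mathbf{1}$, and since $A$ is symmetric there is an orthogonal decomposition $\RR^n = \la \mathbf{1}\ra \oplus \mathbf{1}^\perp$ into $A$-invariant subspaces. On $\mathbf{1}^\perp$ the matrix $J$ acts as zero, so every eigenvalue of $A$ occurring there is a root of the quadratic $x^2 - (\lambda - \mu)x - (k - \mu)$; denote its two roots by $r$ and $s$. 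Thus every adjacency eigenvalue lies in $\{k, r, s\}$.

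Next I would pin down that all three of $k, r, s$ occur and are distinct. This reduces to one classical fact: a connected graph whose adjacency matrix has at most two distinct eigenvalues is complete. Since $G$ is connected and non-complete, $A$ must have at least three distinct eigenvalues, which forces $r \neq s$ and forces both $r$ and $s$ to appear on $\mathbf{1}^\perp$. Translating through $L = kI - A$, the Laplacian spectrum is exactly $\{0,\, k - r,\, k - s\}$, with $0$ simple by connectedness and $k - r \neq k - s$; hence the non-zero Laplacian eigenvalues are precisely $k - r$ and $k - s$, two distinct values.

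Finally I would evaluate their product via Vieta's formulas. From the quadratic, $r + s = \lambda - \mu$ and $rs = \mu - k$, so
\[
(k - r)(k - s) = k^2 - k(\lambda - \mu) + (\mu - k) = k^2 - k\lambda - k + k\mu + \mu,
\]
and the parameter identity $(n - k - 1)\mu = k(k - \lambda - 1)$ of Proposition~\ref{prop:SRG_para} rearranges to exactly this expression equalling $n\mu$, giving $(k - r)(k - s) = n\mu$. Because $G$ is connected and non-complete, Proposition~\ref{prop:SRG_para} also forces $\mu \geq 1$, so $n\mu > 0$ and neither factor $k - r$ nor $k - s$ vanishes, confirming that these are genuinely non-zero. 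I expect the only mildly delicate step to be the justification that both restricted eigenvalues truly occur (and that the eigenvalue $0$ is simple), which is where connectedness and the ``at most two eigenvalues'' fact do the real work; the rest is the quadratic relation together with the stated parameter identity.
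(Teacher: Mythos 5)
Your argument is correct, and it is the standard proof of this fact (the paper itself gives no proof, simply citing Lorenzini, whose argument is the same: the quadratic relation $A^2=(\lambda-\mu)A+(k-\mu)I+\mu J$, the restriction to $\mathbf{1}^\perp$, and Vieta's formulas combined with the parameter identity $(n-k-1)\mu=k(k-\lambda-1)$). The only points needing care --- that both restricted eigenvalues genuinely occur and are distinct and nonzero --- are handled correctly by your appeal to connectedness, non-completeness, and $\mu\geq 1$.
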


Finally, we define the essential objects surrounding signed graphs.

\begin{definition}
A {\em signed graph} $G_{\sigma}$ is a graph $G=(V,E)$ together with an assignment $\sigma:E\rightarrow\{\pm\}$.
{\em Switching} a vertex flips the signs of the edges incident to it.
A signed graph is {\em balanced} if every cycle has an even number of negative edges; a signed graph is balanced if and only if it can be transformed to a graph with all edges positive by switchings \cite[Corollary~3.3]{zaslavsky}.

For two vertices $u\neq v$, their {\em net number} $\cn(u,v)$ of common neighbors is the number of positive length 2 paths between $u,v$ (both edges are of the same sign) minus the number of negative length 2 paths (edges are of opposite signs).
\end{definition}

\begin{definition}
The {\em Laplacian} of $G_{\sigma}$ is $L_{\sigma}:=D-A_{\sigma}$, where $D$ is the same diagonal matrix as the underlying $G$'s, and $A_{\sigma}$ is the signed adjacency matrix of $G_{\sigma}$, in which a positive (respectively, negative) edge $uv$ is represented by $A_{uv}=A_{vu}=1$ (respectively, $A_{uv}=A_{vu}=-1$).
$L_{\sigma}$ is of full rank if and only if $G_\sigma$ is unbalanced \cite[Proposition~9.9]{RT_Signed}, in which case the {\em critical group} $\K(G_{\sigma})$ is the cokernel of $L_{\sigma}$, or equivalently, $\mathbb{Z}^n/\row_{\mathbb Z} L$.
\end{definition}

It is straightforward to check that switchings do not change the Laplacian spectrum or the critical group.

\section{Proofs} \label{sec:result}

The proofs in various settings are similar but with a few minor differences.
Instead of describing a convoluted universal proof, we first present the proof for SRGs in details, and explain the straightforward modifications in other cases.

\subsection{Strongly Regular Graphs}

\begin{proof} [Proof of Theorem~\ref{thm:main} for SRGs]
Pick an arbitrary edge $uv$, we claim that the vector $n\mu[{\bf e}_u-{\bf e}_v]\in\{{\bf u}\in\mathbb{Z}^n:\sum_i {\bf u}_i = 0\}$ can be written as 
\begin{equation} \label{eq:SRG_decomp}
(k+\mu-\lambda-1)L_u-(k+\mu-\lambda-1)L_v+\sum_{w\in N(u)\setminus\{v\}}L_{w}-\sum_{w\in N(v)\setminus\{u\}}L_w,
\end{equation}
here $L_x$ is the $x$-th row of $L$ and $N(x)$ is the neighborhood of $x$.

We prove the claim by classifying the vertices into four types.

Case I: $x=u$ ($x=v$ is similar).
The four terms in (\ref{eq:SRG_decomp}) contribute $(k+\mu-\lambda-1)k,(k+\mu-\lambda-1), -(k-1), \lambda$ at the $x$-coordinate, respectively.
The sum of these terms is \\
\begin{align*}
~& (k+\mu-\lambda-1)k+(k+\mu-\lambda-1)-(k-1)+\lambda\\
=~&  k^2+k\mu-k\lambda+\mu-k\\
=~& k(k-\lambda-1)+(k+1)\mu\\
=~& n\mu,
\end{align*}
by Proposition~\ref{prop:SRG_para}.

Case II: $x\neq v$ is adjacent to $u$ but not to $v$ (the opposite case is similar).
The four terms in (\ref{eq:SRG_decomp}) contribute $-(k+\mu-\lambda-1), 0, -(\lambda-k), \mu-1$ at the $x$-coordinate, respectively, and sum to $0$.
The third term is $-(\lambda-k)$ because $x\in N(u)\setminus\{v\}$ implies that $L_x$ contributes $k$, while there are $\lambda$ more neighbors of $x$ from $N(u)\setminus\{v\}$ and such $L_y$ each contributes $-1$.

Case III: $x$ is adjacent to both $u,v$.
The four terms in (\ref{eq:SRG_decomp}) contribute $-(k+\mu-\lambda-1), (k+\mu-\lambda-1), -(\lambda-k-1), (\lambda-k-1)$ at the $x$-coordinate, respectively, and sum to $0$.

Case IV: $x$ is adjacent to neither of $u,v$.
The four terms in (\ref{eq:SRG_decomp}) contribute $0,0, -\mu, \mu$ at the $x$-coordinate, respectively, and sum to $0$.\\

If $G$ is not a complete bipartite graph, then there exists a vertex $w'$ such that the coefficient of $L_{w'}$ in (\ref{eq:SRG_decomp}) is zero:
if $G$ is not completely multipartite, then by Proposition~\ref{prop:SRG_para}, there exists a vertex that is not adjacent to $u$ or $v$, hence not involved in (\ref{eq:SRG_decomp});
otherwise if $G$ is completely $r$-partite for some $r\geq 3$ with partition $U_1\sqcup\ldots\sqcup U_r$, then without loss of generality we may assume $u\in U_1,v\in U_2$, pick an arbitrary $w'\in U_3$, the term $L_{w'}$ appear in (\ref{eq:SRG_decomp}) twice with opposite coefficients, hence it vanishes in the sum.

Now $\mathcal{B}_{w'}:=\{L_x: x\neq w'\}$ is a basis of $\row_{\mathbb Z} L$ in which (\ref{eq:SRG_decomp}) is expressed.
Moreover, since there exists some $w''\neq u,v$ that is adjacent to exactly one of $u,v$ (otherwise $k=\lambda+1$ and the quantity in Proposition~\ref{prop:SRG_para} is zero), the coefficient of $L_{w''}$ in (\ref{eq:SRG_decomp}) is $\pm 1$.
In particular, the gcd of coefficients is $1$ and $n\mu[{\bf e}_u-{\bf e}_v]$ cannot be an integral multiple of any element in $\row_{\mathbb Z} L$ other than itself (or its negation). That is, the order of ${\bf e}_u-{\bf e}_v$ is exactly $n\mu$.

The only case remaining is $K_{m,m}$ ($m\geq 2$), whose critical group is $(\mathbb{Z}/m\mathbb{Z})^{2m-4}\oplus(\mathbb{Z}/m^2\mathbb{Z})$ \cite{Lorenzini2}, and the exponent is a half of $n\mu=2m^2$.
Nevertheless, this fact can be seen by analyzing the proof above:
let $U_1\sqcup U_2$ be the bipartition of vertices, pick an edge $uv$ with $u\in U_1,v\in U_2$ pick $y\in U_1\setminus\{u\}$ and use the fact that $L_y = -\sum_{x\neq y} L_x$, (\ref{eq:SRG_decomp}) becomes $2mL_{u_1}-(2m-2)L_{u_2}+2\sum_{w\in U_2\setminus\{u_2\}}L_{w}$, whose gcd of coefficients is $2$, i.e., the order is $n\mu/2=m^2$.
\end{proof}

\begin{remark} \label{rm:summary}
Summarizing the proof, we need to (1) find a suitable edge $uv$ from the graph; (2) write an integral multiple of ${\bf e}_u-{\bf e}_v$ explicitly as an integral sum of $L$'s rows that only involves $u,v$ and their neighbors; (3) find a vertex $w'$ whose row $L_{w'}$ is not involved in the sum, such as when $w'$ is not $u,v$ or their neighbors; and (4) find a vertex $w''$ whose coefficient in the sum is $\pm 1$.
\end{remark}

\begin{remark}
As $G$ is connected, $\K(G)\cong\{{\bf u}\in\mathbb{Z}^n:\sum_i {\bf u}_i = 0\}/\row_{\mathbb Z} L$ is generated by the equivalence classes of ${\bf e}_u-{\bf e}_v$'s, $uv\in E$, so the above proof verifies Theorem~\ref{thm:Lorenzini} for SRGs directly.
It is possible to complete the proofs in the remaining sections to perform a similar verification, but the extra arguments are omitted in the interest of brevity.
\end{remark}

\subsection{Non-regular Graphs with Two Laplacian Eigenvalues}

The following is a summary of \cite[Section~2]{DH_2LE}, which shows that a graph with exactly two distinct non-zero Laplacian eigenvalues has some form of regularity even if it is not degree-regular.

\begin{proposition} \label{prop:2LE}
(1) A non-regular graph $G$ has exactly two distinct non-zero Laplacian eigenvalues $\theta_1,\theta_2$ if and only if there exist constants $\mu,\overline{\mu}$ such that any two non-adjacent vertices of $G$ have exactly $\mu$ common neighbors, and any two adjacent vertices of $G$ have exactly $\overline{\mu}$ common non-neighbors.

(2) The degree of a vertex of $G$ only takes one of two values $k_1,k_2$, and the number of common neighbors of two adjacent vertices is
$$
\begin{cases} 
\mu-1+k_1-k_2 &\mbox{if both vertices have degree\ } k_1,\\
\mu-1 & \mbox{if the vertices have different degrees,}\\
\mu-1+k_2-k_1 &\mbox{if both vertices have degree\ } k_2.
\end{cases}
$$

(3) These parameters satisfy the relations $\theta_1+\theta_2=k_1+k_2+1=n+\mu-\overline{\mu}$, $\theta_1\theta_2=k_1k_2+\mu=n\mu$.
\end{proposition}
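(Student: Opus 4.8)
The plan is to extract all three parts from a single matrix identity. Since $G$ is connected, $0$ is a simple Laplacian eigenvalue with eigenvector the all-ones vector $\mathbf{1}$, so writing $J$ for the all-ones matrix and letting $E_0=\tfrac{1}{n}J,E_1,E_2$ denote the orthogonal projections onto the $0$-, $\theta_1$-, and $\theta_2$-eigenspaces, the spectral decomposition $L=\theta_1 E_1+\theta_2 E_2$ together with $E_iE_j=\delta_{ij}E_i$ and $E_0+E_1+E_2=I$ gives
\begin{equation}
(L-\theta_1 I)(L-\theta_2 I)=\theta_1\theta_2 E_0=\frac{\theta_1\theta_2}{n}\,J. \tag{$\ast$}
\end{equation}
Everything in the forward direction of (1), together with (2) and (3), I would read off from the entries of $(\ast)$.

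For the diagonal entries I would use $L^2_{ii}=d_i^2+d_i$ (with $d_i$ the degree of vertex $i$), so the $(i,i)$-entry of $(\ast)$ becomes the quadratic $d_i^2+(1-\theta_1-\theta_2)d_i+\theta_1\theta_2(1-\tfrac{1}{n})=0$; since a quadratic has at most two roots, every degree is forced into one of two values $k_1,k_2$, and Vieta's formulas give $k_1+k_2=\theta_1+\theta_2-1$ and $k_1k_2=\theta_1\theta_2(1-\tfrac1n)$. For the off-diagonal entries I would use $L^2_{uv}=(d_u+d_v)L_{uv}+c(u,v)$, where $c(u,v)$ counts common neighbors. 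The non-adjacent case of $(\ast)$ then yields $c(u,v)=\theta_1\theta_2/n$, which I name $\mu$, establishing both the constancy claim in (1) and the relation $\theta_1\theta_2=n\mu$ (whence $k_1k_2+\mu=n\mu$ from the Vieta product above). The adjacent case gives $c(u,v)=d_u+d_v-(\theta_1+\theta_2)+\mu$, and substituting the three degree combinations for $u,v$ reproduces exactly the case analysis of (2). Finally, for adjacent $u,v$ an inclusion–exclusion count of the vertices adjacent to neither shows the number of common non-neighbors equals $n-d_u-d_v+c(u,v)=n-(\theta_1+\theta_2)+\mu$, a constant I call $\overline{\mu}$; this proves the remaining half of (1) and supplies $\theta_1+\theta_2=n+\mu-\overline{\mu}$, completing (3).

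For the converse in (1) I would run the computation backwards: define $\theta_1,\theta_2$ as the roots of $x^2-(n+\mu-\overline{\mu})x+n\mu$ and verify directly from the hypotheses that every off-diagonal entry of $M:=(L-\theta_1 I)(L-\theta_2 I)-\mu J$ vanishes — the adjacent entries collapse to the constant $\overline{\mu}-n$ precisely because the common-non-neighbor count is $\overline{\mu}$, and the non-adjacent entries collapse to $\mu$ by hypothesis. This leaves the diagonal, and the crucial point is that the hypotheses constrain common (non-)neighbors, i.e. off-diagonal data, but say nothing a priori about the distribution of degrees. I would resolve this by applying $M$ to $\mathbf{1}$: since $L\mathbf{1}=0$ one gets $(L-\theta_1 I)(L-\theta_2 I)\mathbf{1}=\theta_1\theta_2\mathbf{1}=n\mu\,\mathbf{1}=\mu J\mathbf{1}$, so $M\mathbf{1}=0$; as $M$ is already known to be diagonal, each diagonal entry must vanish, giving $(\ast)$ and hence that $L$ has no eigenvalue on $\mathbf{1}^\perp$ other than $\theta_1,\theta_2$.

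The step I expect to be the main obstacle is exactly this diagonal vanishing in the converse, where some global input such as the action on $\mathbf{1}$ is essential to control the degrees. A secondary point requiring care is the nonvanishing and distinctness of $\theta_1,\theta_2$: I would note that $\theta_1\theta_2=n\mu\neq 0$ excludes a zero eigenvalue, while $\theta_1=\theta_2$ would force $L$ (being symmetric, hence diagonalizable) to be scalar on $\mathbf{1}^\perp$ and $G$ to be regular, contradicting the hypothesis; together with the simple $0$ eigenvalue this certifies ``exactly two distinct non-zero eigenvalues.''
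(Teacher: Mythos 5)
Your proof is correct. Note that the paper does not prove Proposition~\ref{prop:2LE} at all --- it is stated as a summary of \cite[Section~2]{DH_2LE} and used as a black box --- so there is no in-paper argument to compare against; what you have written is essentially the standard spectral proof from that reference, built on the identity $(L-\theta_1 I)(L-\theta_2 I)=\tfrac{\theta_1\theta_2}{n}J$. All the entrywise computations check out: $(L^2)_{ii}=d_i^2+d_i$ and $(L^2)_{uv}=(d_u+d_v)L_{uv}+c(u,v)$ are right, the degree quadratic and Vieta relations give (2) and (3), and in the converse the observation that $M$ is diagonal and kills $\mathbf{1}$ (hence vanishes) is exactly the right way to recover the diagonal constraint that the hypotheses do not supply directly. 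Two small points you gloss over but should record: (i) in the converse, $\theta_1,\theta_2$ are defined only as roots of a real quadratic, so you should note they are real --- e.g.\ because the symmetric matrix $L$ restricted to $\mathbf{1}^\perp$ satisfies that quadratic and has real eigenvalues, forcing the quadratic to have a real (hence two real) roots; and (ii) $\mu\neq 0$ is needed to exclude a zero eigenvalue, which follows since a connected non-regular graph is non-complete and therefore has two vertices at distance $2$, giving $\mu\geq 1$. Neither gap is serious.
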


The following lemma extends Proposition~\ref{prop:SRG_para}.
Recall that the {\em sum} of two disjoint graphs $G_1=(V_1,E_1), G_2=(V_2, E_2)$ is the graph $(V_1\cup V_2, E_1\cup E_2\cup (V_1\times V_2))$, i.e., two vertices are adjacent if they are from the same $G_i$ and were adjacent there, or if they are from different $G_i$'s.

\begin{lemma} \label{lem:2LE_no_w}
Let $G$ be a graph with exactly two distinct non-zero Laplacian eigenvalues (hence with structural parameters $\mu,\overline{\mu}$) and two distinct vertex degrees $k_1<k_2$.
Let $uv$ be an edge between two vertices of different degrees.
If every other vertex is adjacent to at least one of $u,v$, then $G$ is the sum of a complete graph $K_{m'}$ ($m'\geq 1$), and either a complete multipartite graph $K_{m,m,\ldots,m}$ ($m>1$) or a disjoint union of complete graphs $pK_m$ ($m\geq 1,p\geq 2$).
\end{lemma}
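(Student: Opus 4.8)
The plan is to show that the covering hypothesis is so restrictive that it essentially determines $G$. The first observation is that it forces the second structural parameter $\overline{\mu}$ to vanish: since $u$ and $v$ are adjacent and, by assumption, every other vertex is adjacent to at least one of them, the edge $uv$ has no common non-neighbor. As Proposition~\ref{prop:2LE}(1) guarantees that the number of common non-neighbors is one and the same constant $\overline{\mu}$ over all edges, we conclude $\overline{\mu}=0$. (The hypothesis that $u,v$ have unequal degrees is not actually needed for this step; it only records that the two endpoints sit in parts of different sizes.)

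Next I would convert $\overline{\mu}=0$ into a forbidden-subgraph statement. If some edge $xz$ admitted a vertex $y$ adjacent to neither $x$ nor $z$, then $y$ would be a common non-neighbor of the adjacent pair $x,z$, contradicting $\overline{\mu}=0$; hence $G$ contains no induced copy of $K_2\sqcup K_1$. Equivalently, the relation of non-adjacency is transitive, so the complement $\overline{G}$ is a disjoint union of cliques and $G$ is therefore a complete multipartite graph.

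The heart of the argument is to read off the part sizes, and this is where the two-eigenvalue hypothesis is genuinely consumed. Since a vertex lying in a part of size $s$ has degree $n-s$ and $G$ has exactly two distinct degrees, the parts take exactly two distinct sizes $a<b$. Now the non-adjacent pairs of a complete multipartite graph are precisely the pairs contained in a common part, and two vertices in a part of size $s$ have exactly $n-s$ common neighbors. Proposition~\ref{prop:2LE}(1) forces this count to equal the single constant $\mu$ for every non-adjacent pair; so if both $a\geq 2$ and $b\geq 2$, parts of each size would contain non-adjacent pairs and we would get $n-a=\mu=n-b$, i.e.\ $a=b$, a contradiction. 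Hence $a=1$: the smaller parts are singletons, and these are exactly the universal vertices of $G$.

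Finally I would assemble the pieces. The $m'\geq 1$ singleton parts form a clique joined to every other vertex, so $G=K_{m'}+H$, where $H$ is the graph induced on the remaining parts, all of the common size $b\geq 2$. If there are at least two such parts, then $H=K_{b,\ldots,b}$ is complete multipartite with $m=b>1$, giving the first case; if there is exactly one such part, then $H$ is a single independent set $\overline{K_b}=bK_1$, which is $pK_m$ with $p=b\geq 2$ and $m=1$, giving the second. I expect the third step to be the main obstacle: it is the only point at which the two-eigenvalue hypothesis is used in an essential way, and the clean way to exploit it is to notice that the constancy of $\mu$ over the intra-part pairs simultaneously forces all non-singleton parts to share a single size and forces singleton parts to exist, which is precisely what produces the summand $K_{m'}$ with $m'\geq 1$.
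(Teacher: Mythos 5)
Your proof is correct, and it takes a genuinely different route from the paper's. The paper argues numerically: since $u,v$ have different degrees they share exactly $\mu-1$ common neighbors by Proposition~\ref{prop:2LE}(2), so the covering hypothesis gives $k_1+k_2-\mu+1=n$, which together with $k_1k_2=(n-1)\mu$ from Proposition~\ref{prop:2LE}(3) forces $\{k_1,k_2\}=\{\mu,n-1\}$; the degree-$(n-1)$ vertices are universal and split off as the $K_{m'}$ summand, and the induced subgraph on the remaining vertices is a strongly regular graph whose parameters, via Proposition~\ref{prop:SRG_para}, leave only $K_{m,\ldots,m}$ or $pK_m$. You instead observe that the covering hypothesis forces $\overline{\mu}=0$ by the constancy in Proposition~\ref{prop:2LE}(1), which is a forbidden-induced-$(K_2\sqcup K_1)$ condition and hence makes $G$ complete multipartite outright; the two degrees then give exactly two part sizes, and the constancy of $\mu$ on intra-part pairs pins the smaller size to $1$. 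Your argument is more elementary (no quadratic to solve, no appeal to the classification behind Proposition~\ref{prop:SRG_para}) and in fact proves a strictly sharper conclusion: the subcase $pK_m$ with $m\geq 2$ in the lemma's statement can never occur, since such a $G$ would contain an edge inside a copy of $K_m$ having $(p-1)m>0$ common non-neighbors, contradicting $\overline{\mu}=0$. This is consistent with the paper --- the lemma's conclusion is merely a superset of possibilities, and the corresponding bullet in the proof of Theorem~\ref{thm:main} ends up handling a vacuous case. What the paper's route buys in exchange is the explicit identification $k_1=\mu$, $k_2=n-1$ and uniformity with the SRG machinery already in place.
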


\begin{proof} 
We have $k_1+k_2-\mu+1=n$ as $u,v$ have exactly $\mu-1$ common neighbors, thus $k_1,k_2$ must satisfy $k_1+k_2=n+\mu-1, k_1k_2=(n-1)\mu$.
This means that the two degrees in $G$ are $k_1=\mu,k_2=n-1$.
Let $V_1,V_2$ be the collection of vertices with degrees $k_1,k_2$, respectively, and denote their sizes by $n_1,n_2$.
The induced subgraph $G_1:=G[V_1]$ is then a $(n_1,\mu-n_2,2\mu-n-n_2,\mu-n_2)$-SRG, and by Proposition~\ref{prop:SRG_para}, $G_1$ is either $K_{m,\ldots,m}$ or $pK_m$ ($p>1$, or else $G$ itself is a complete graph),  the conclusion follows.
\end{proof}

\begin{proof} [Proof of Theorem~\ref{thm:main}]
Since regular graphs with exactly two distinct non-zero Laplacian eigenvalues are precisely the SRGs \cite[Lemma~10.2.1]{AGT}, without loss of generality, we may assume the graph has two distinct vertex degrees $k_1,k_2$.
Pick an edge $uv$ such that the degrees of $u,v$ are $k_1,k_2$, respectively (such an edge exists as $G$ is connected).
Then we can verify that $n\mu[{\bf e}_u-{\bf e}_v]$ can be written as 
\begin{equation} \label{eq:2LE_decomp}
k_2L_u-k_1L_v+\sum_{w\in N(u)\setminus\{v\}}L_{w}-\sum_{w\in N(v)\setminus\{u\}}L_w,
\end{equation}
in a manner similar to the calculation for SRGs.

If $G$ is not one of the graphs in Lemma~\ref{lem:2LE_no_w}, then there exists $w'$ that is adjacent to neither of $u,v$.
Otherwise, $G$ is the sum of some $H:=K_{m'}$ and another graph $H'$, without loss of generality, we may assume $v\in H, u\in H'$.
\begin{itemize}
\item For the sum of  $K_{m'}$ and $K_{m,\ldots,m}$ (with partition $U_1\sqcup\ldots\sqcup U_r$, $r\geq 2$), without loss of generality, we may assume $u\in U_1$. Any vertex $w'$ in $U_2$ is a common neighbor of $u,v$ and the coefficient of $L_{w'}$ in (\ref{eq:2LE_decomp}) is zero.
\item For the sum of $K_{m'}$ with $m'\geq 2$ and $pK_m$, we can choose any vertex in $K_{m'}$ other than $v$ itself to be $w'$.
\item For the sum of $K_1$ and $pK_m$  with $m\geq 2$, we can choose any vertex in the same copy of $K_m$ as $u$ to be $w'$.
\item The only remaining case $K_1+ pK_1=K_{1,p}$ ($p>1$) is a tree, so $\K(K_{1,p})$ is a trivial group, whose size is obviously less than $n\mu=p+1$.
\end{itemize}

The existence of $w''\neq u,v$ that is adjacent to exactly one of $u,v$ is trivial because $N(u),N(v)$ are of different sizes.
Hence, every ingredient described in Remark~\ref{rm:summary} is available in this setting and the proof remains valid.
\end{proof}

\subsection{Signed Graphs}

We omit the $\sigma$ subscript of the signed graph Laplacian for clarity.
We only consider unbalanced signed graphs, as the balanced case is equivalent to that of ordinary graphs by switchings.
In particular, the $n$ rows of $L$ together form a basis of $\row_{\mathbb Z} L$, and we need not find the special vertex $w'$ as in Remark~\ref{rm:summary}.

It turns out that signed graphs with exactly two distinct Laplacian eigenvalues (necessarily non-zero) act similarly as their unsigned counterparts.
The next proposition summarizes some basic facts.
The regular case is \cite[Theorem~4.1]{Stanic3}, the non-regular case is \cite[Lemma~3.4]{HouTangWang2}.

\begin{proposition} \label{prop:SG_para}
If a signed graph $G_\sigma$ has exactly two distinct Laplacian eigenvalues $\theta_1,\theta_2$, then either:\\
(1) $G$ is $k$-regular, and by denoting $\lambda:=2k-\theta_1-\theta_2$, $\theta_1\theta_2=k(k-\lambda-1)$, and for every edge $uv$ of sign $c$, $\cn(u,v)=c\lambda$, while for every pair of non-adjacent $u,v$, $\cn(u,v)=0$, or\\
(2) the degree of a vertex of $G$ only takes one of two values $k_1,k_2$, and the net number of common neighbors of two vertices is $c(d+d'-\theta_1-\theta_2)$ if they are adjacent along an edge of sign $c$ and are of degrees $d,d'$, respectively, and $\cn(u,v)=0$ otherwise.
Moreover, these parameters satisfy the relations $\theta_1+\theta_2=k_1+k_2+1,\theta_1\theta_2=k_1k_2$.
\end{proposition}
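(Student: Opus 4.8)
The plan is to exploit the spectral hypothesis directly at the level of the matrix $L$. Since $L$ is real symmetric it is diagonalizable, so having exactly two distinct eigenvalues $\theta_1,\theta_2$ is equivalent to its minimal polynomial being $(x-\theta_1)(x-\theta_2)$. Hence
\begin{equation*}
(L-\theta_1 I)(L-\theta_2 I)=0,
\end{equation*}
or equivalently $L^2=(\theta_1+\theta_2)L-\theta_1\theta_2 I$. Writing $L=D-A$ with $A=A_\sigma$ the signed adjacency matrix and expanding $L^2=D^2-DA-AD+A^2$, I would extract both conclusions by comparing the two sides of this identity entry by entry.

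The crux is to read $A^2$ combinatorially. For $u\neq v$ the entry $(A^2)_{uv}=\sum_w A_{uw}A_{wv}$ contributes $+1$ for each common neighbor $w$ joined to $u,v$ by two edges of equal sign and $-1$ for each common neighbor joined by edges of opposite sign, so by definition $(A^2)_{uv}=\cn(u,v)$; on the diagonal $(A^2)_{uu}=\sum_w A_{uw}^2=\deg(u)$ because $(\pm 1)^2=1$. Using $(DA)_{uv}+(AD)_{uv}=(\deg(u)+\deg(v))A_{uv}$, $(D^2)_{uv}=0$, and $L_{uv}=-A_{uv}$ for $u\neq v$, the off-diagonal comparison collapses to
\begin{equation*}
\cn(u,v)=A_{uv}\bigl(\deg(u)+\deg(v)-\theta_1-\theta_2\bigr).
\end{equation*}
For non-adjacent $u,v$ this reads $\cn(u,v)=0$, and for an edge of sign $c$ it reads $\cn(u,v)=c(\deg(u)+\deg(v)-\theta_1-\theta_2)$, which is exactly the net-common-neighbor formula claimed in both (1) and (2).

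To pin down the degrees I would compare diagonal entries, where $(D^2)_{uu}=\deg(u)^2$ and $(DA)_{uu}=(AD)_{uu}=0$, obtaining
\begin{equation*}
\deg(u)^2+\deg(u)=(\theta_1+\theta_2)\deg(u)-\theta_1\theta_2.
\end{equation*}
Thus every degree is a root of the fixed quadratic $d^2-(\theta_1+\theta_2-1)d+\theta_1\theta_2=0$, so at most two distinct degrees $k_1,k_2$ occur. If both occur then $G$ is non-regular, the two roots are $k_1,k_2$, and Vieta's formulas give $k_1+k_2=\theta_1+\theta_2-1$ and $k_1k_2=\theta_1\theta_2$, establishing (2). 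If a single value $k$ occurs then $G$ is $k$-regular; substituting $\deg(u)=\deg(v)=k$ into the off-diagonal formula and writing $\lambda:=2k-\theta_1-\theta_2$ gives $\cn(u,v)=c\lambda$, while substituting $d=k$ into the degree quadratic rearranges to $\theta_1\theta_2=k(\theta_1+\theta_2-1-k)=k(k-\lambda-1)$, establishing (1).

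I expect the only genuinely delicate point to be the sign bookkeeping identifying $(A^2)_{uv}$ with $\cn(u,v)$: one must check in all four sign combinations that the paper's convention (a ``positive'' length-$2$ path has two edges of the same sign) agrees with the product $A_{uw}A_{wv}$ of matrix entries. Everything downstream is a routine entrywise comparison, and the regular/non-regular split falls out automatically because a single quadratic governs all degrees; the only care required is to use the single-root substitution---yielding $k(k-\lambda-1)$---in case (1) but the full Vieta pair---yielding $k_1k_2$---in case (2).
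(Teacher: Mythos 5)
Your argument is correct, and every step checks out: the minimal polynomial of the symmetric matrix $L$ with exactly two distinct eigenvalues is $(x-\theta_1)(x-\theta_2)$, the identification $(A_\sigma^2)_{uv}=\cn(u,v)$ for $u\neq v$ and $(A_\sigma^2)_{uu}=\deg(u)$ is right under the paper's sign convention, the off-diagonal comparison gives $\cn(u,v)=A_{uv}(\deg(u)+\deg(v)-\theta_1-\theta_2)$, and the diagonal comparison forces every degree to satisfy $d^2-(\theta_1+\theta_2-1)d+\theta_1\theta_2=0$, from which both the two-degree bound and the parameter relations (via Vieta in case (2), via direct substitution in case (1)) follow. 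The one point of comparison worth noting is that the paper does not prove this proposition at all: it is quoted as known, with the regular case attributed to Stani\'c and the non-regular case to Hou--Tang--Wang. So your proposal supplies a short, self-contained, and uniform derivation of both cases from the single identity $L^2=(\theta_1+\theta_2)L-\theta_1\theta_2 I$, which is the standard mechanism behind those cited results; the only caveat is that in case (1) one should not invoke Vieta (the quadratic may have a second root that is not realized as a degree), and you correctly avoid this by substituting the single root $k$ directly.
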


The following very simple lemma serves a similar purpose as Lemma~\ref{lem:2LE_no_w} to isolate the special case of signed complete graphs: they are exceptional because ${\bf e}_u-{\bf e}_v$'s order is not maximal for any edge $uv$.

\begin{lemma} \label{lem:complete}
If a graph $G$ on $n\geq 3$ vertices has the property that $N(u)\setminus\{v\}=N(v)\setminus\{u\}$ for every edge $uv$, then $G$ is the complete graph $K_n$.
\end{lemma}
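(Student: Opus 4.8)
The plan is to show that the hypothesis forces adjacency to be an equivalence relation on the vertex set, and that connectedness then makes all vertices mutually adjacent. First I would argue that adjacency is \emph{transitive} under the stated condition. Suppose $uv$ and $vw$ are edges with $u \neq w$. Since $uv$ is an edge, the hypothesis gives $N(u)\setminus\{v\} = N(v)\setminus\{u\}$. Now $w \in N(v)$, and $w \neq u$, so $w \in N(v)\setminus\{u\} = N(u)\setminus\{v\}$; in particular $w \in N(u)$, i.e. $uw$ is an edge. Thus any two vertices joined by a path of length $2$ are themselves adjacent.

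\smallskip

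Next I would promote this to full transitivity along arbitrary paths by induction on path length. If $u = x_0, x_1, \ldots, x_\ell = w$ is a path, the length-$2$ argument collapses $x_0 x_2$ into an edge, reducing the path length by one while keeping the endpoints fixed; iterating, every pair of vertices in the same connected component is adjacent. (One minor point to check: the length-$2$ argument assumes the two outer vertices are distinct, which holds along a path, so no degenerate case intrudes.) Since $G$ is assumed connected, every pair of distinct vertices is then adjacent, which is exactly the statement that $G = K_n$.

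\smallskip

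The only subtlety, and the step I would treat most carefully, is the handling of the ``$\setminus$'' corrections in the neighborhood identity: the condition $N(u)\setminus\{v\} = N(v)\setminus\{u\}$ equates neighborhoods only \emph{after} deleting the two endpoints of the edge, so one must verify at each application that the transported vertex $w$ is genuinely different from the deleted vertex (here $w \neq u$, which holds since we took $u \neq w$). This is the reason the hypothesis $n \geq 3$ is needed and is the sole place the bookkeeping could go wrong; everything else is a routine induction. An alternative but essentially equivalent route would be to observe directly that the relation ``equal or adjacent'' is reflexive and symmetric by definition and transitive by the above, hence an equivalence relation whose single class (by connectedness) is all of $V$.
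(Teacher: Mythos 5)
Your proof is correct and rests on the same key observation as the paper's: applying the hypothesis to an edge $uv$ transports any neighbor $w\neq u$ of $v$ into $N(u)$, so a common neighbor forces adjacency. You package this as an induction on path length (adjacency is transitive along paths, hence connectivity gives $K_n$), while the paper fixes a single edge $uv$ and argues in two steps — every vertex lies in $N(u)\cup N(v)\cup\{u,v\}$, and $N(u)\setminus\{v\}$ is a clique — so the difference is purely organizational.
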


\begin{proof}
Pick an edge $uv$.
If not every vertex is adjacent to at least one of $u,v$, pick a vertex $y\not\in N(u)\cup N(v)$ that is adjacent to $x\in N(u)\cup N(v)$ (which exists by connectivity), then applying the assumption on $ux$ implies that $y\in N(x)\setminus\{u\}=N(u)\setminus\{x\}$, a contradiction.
So it remains to prove that any two vertices $x,y\in N(u)\setminus\{v\}$ are adjacent, which follows from applying the assumption on $ux$ and concluding that $y\in N(u)\setminus\{x\}=N(x)\setminus\{u\}$.
\end{proof}

\begin{proof} [Proof of Theorem~\ref{thm:SG}]
Case I: $G$ is $k$-regular but not complete.
Pick an arbitrary edge $uv$, by switching if necessary we may assume $\sigma(uv)=+$.
It can be checked that $k(k-\lambda-1)[{\bf e}_u-{\bf e}_v]$ can be written as
\begin{equation} \label{eq:SRSG_decomp}
(k-\lambda-1)L_u-(k-\lambda-1)L_v+\sum_{w\in N(u)\setminus\{v\}}\sigma(uw)L_{w}-\sum_{w\in N(v)\setminus\{u\}}\sigma(vw)L_w.
\end{equation}
We verify the claim at $u$ as an illustration of how the assumption on $\cn$'s comes into the proof.
The first three terms of (\ref{eq:SRSG_decomp}) contribute $(k-\lambda-1)k,k-\lambda-1,-(k-1)$ at the $u$-coordinate, respectively.
For every $w\in N(u)\cap N(v)$, the sign of the $u$-coordinate of $L_w$ is $-\sigma(uw)$, so the contribution of $-\sigma(vw)L_w$ at the $u$-coordinate is $\sigma(uw)\sigma(wv)$, which is the sign of the path $u-w-v$, thus the total contribution from the last term of (\ref{eq:SRSG_decomp}) is by definition $\cn(u,v)=\lambda$.
Therefore, the sum of all four terms is $(k-\lambda-1)k$ as expected.

By Lemma~\ref{lem:complete}, we may choose $uv$ in a way that $(N(u)\setminus\{v\})\triangle(N(v)\setminus\{u\})$ is non-empty.
For any $w''$ in the symmetric difference, the coefficient of $L_{w''}$ in (\ref{eq:SRSG_decomp}) is $\pm 1$, which shows that the order of ${\bf e}_u-{\bf e}_v$ is $k(k-\lambda-1)$.\\

Case II: $G$ is complete.
Up to switching, there is a unique unbalanced signing of $K_3$.
By direct computation, its Laplacian spectrum is $1,1,4$ and its critical group is isomorphic to $\mathbb{Z}/4\mathbb{Z}$, matching the theorem statement.
For $n\geq 4$, we first note that there exists an unbalanced 3-cycle $u-v-w-u$ of $G$: consider the shortest unbalanced cycle $u_1-\ldots-u_l-u_1$ of $G$, if $l>3$, then either $u_1-u_2-u_l-u_1$ or $u_2-u_3-\ldots-u_l-u_2$ is a shorter unbalanced cycle.
Without loss of generality, $uv$ is the unique negative edge on this 3-cycle.
Similar to (\ref{eq:SRSG_decomp}), $k(k-\lambda-1)[{\bf e}_u+{\bf e}_v]$ equals
\begin{equation} \label{eq:SCG_decomp}
(k-\lambda-1)L_u+(k-\lambda-1)L_v+\sum_{w\in N(u)\setminus\{v\}}\sigma(uw)L_{w}+\sum_{w\in N(v)\setminus\{u\}}\sigma(vw)L_w.
\end{equation}
Now consider $k(k-\lambda-1){\bf e}_u = \frac{k(k-\lambda-1)}{2}[({\bf e}_u+{\bf e}_v)+({\bf e}_w-{\bf e}_v)+({\bf e}_u-{\bf e}_w)]$ expressed in the rows of $L$.
Using (\ref{eq:SRSG_decomp}) and (\ref{eq:SCG_decomp}), it can be seen that the coefficient of $L_x$ for any $x\neq u,v,w$ is $\pm 1$ by a case-by-case analysis with the signs of the edges $ux,vx,wx$.
For example, if $ux$ is the unique negative edge, then the coefficients of $L_x$ in the three terms $k(k-\lambda-1)[{\bf e}_u+{\bf e}_v],k(k-\lambda-1)[{\bf e}_w-{\bf e}_v],k(k-\lambda-1)[{\bf e}_u-{\bf e}_w]$ are $0,0,-2$, respectively.
Other cases are analogous.\\

Case III: $G$ has two distinct vertex degrees $k_1,k_2$.
Pick an edge whose endpoints $u,v$ are of degrees $k_1,k_2$, respectively, and again we may assume $\sigma(uv)=+$.
$k_1k_2[{\bf e}_u-{\bf e}_v]$ can be written as
\begin{equation} \label{eq:SG2LE_decomp}
k_2L_u-k_1L_v+\sum_{w\in N(u)\setminus\{v\}}\sigma(uw)L_{w}-\sum_{w\in N(v)\setminus\{u\}}\sigma(vw)L_w.
\end{equation}
Since $N(u),N(v)$ are of different size, there exists $w''\in (N(u)\setminus\{v\})\triangle(N(v)\setminus\{u\})$ whose row's coefficient in (\ref{eq:SG2LE_decomp}) is $\pm 1$ and the conclusion follows.
\end{proof}

\section{The Monodromy Pairing on \texorpdfstring{$\K(G)$}{K(G)}} \label{sec:pairing}

The critical group of a graph $G$ is equipped with a canonical pairing $\langle \cdot,\cdot\rangle:\K(G)\times\K(G)\rightarrow \mathbb{Q}/\mathbb{Z}$ known as the {\em monodromy pairing}, which is related to Grothendieck's pairing in the theory of abelian varieties \cite{BL_Pairing} and the energy pairing in the potential theory on graphs \cite{BS_Pairing}.
We describe the monodromy pairing on the critical groups of SRGs that are not complete or complete bipartite (similar calculations can be done in other settings considered in this paper, but we restrict ourselves to SRGs for the illustration), and sketch an approach that could be useful for understanding the structure of $\K(G)$ further using the pairing.

\begin{definition} \label{def:pairing}
Let ${\bf D},{\bf D}'\in\{{\bf u}\in\mathbb{Z}^n:\sum_i {\bf u}_i = 0\}$ be two vectors representing two elements of $\K(G)$.
Choose positive integers $m,m'$ such that $L{\bf f}=m{\bf D}, L{\bf f}'=m'{\bf D}'$ for some ${\bf f},{\bf f}'\in \mathbb{Z}^n$ ($m,m'$ exist because $\K(G)$ is finite)\footnote{The notations were chosen to reflect the tropical perspective of critical groups: ${\bf D}$ stands for a {\em tropical divisor} and ${\bf f}$ stands for a {\em tropical meromorphic function}.}.
Then the {\em monodromy pairing} between $[{\bf D}],[{\bf D}']$ is $\langle [{\bf D}],[{\bf D}']\rangle:=\frac{{\bf f}^T}{m}L\frac{{\bf f}'}{m'}=\frac{{\bf f}^T{\bf D}'}{m}=\frac{{\bf D}^T{\bf f}'}{m'}\in\mathbb{Q}/\mathbb{Z}$.
\end{definition}

\begin{proposition} \cite[Lemma~1.1]{BL_Pairing} 
The pairing is well-defined, bilinear, and symmetric.
\end{proposition}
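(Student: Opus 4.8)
The plan is to treat the three displayed expressions as a single rational number attached to fixed representatives ${\bf D},{\bf D}'$ and fixed auxiliary data $m,m',{\bf f},{\bf f}'$, verify that all ambiguities disappear in the stated target, and read off bilinearity and symmetry from the resulting formula. The one structural input I will use throughout is that $L$ is symmetric, so that $L=L^T$, $\row_{\ZZ}L=L\ZZ^n$, and $\ker L=\QQ\cdot{\bf 1}$ where ${\bf 1}$ is the all-ones vector.

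First I would check that the three expressions in Definition~\ref{def:pairing} agree as rational numbers. Substituting $L{\bf f}'=m'{\bf D}'$ into $\frac{{\bf f}^T}{m}L\frac{{\bf f}'}{m'}$ gives $\frac{{\bf f}^T{\bf D}'}{m}$, while transposing and using $L=L^T$ together with $L{\bf f}=m{\bf D}$ gives $\frac{{\bf D}^T{\bf f}'}{m'}$; this is a one-line manipulation. The same symmetry shows the middle expression ${\bf f}^TL{\bf f}'$ is unchanged under swapping $({\bf f},m)\leftrightarrow({\bf f}',m')$, so once well-definedness is established, symmetry of the pairing is automatic.

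Next I would dispose of the two independent sources of ambiguity. For the auxiliary data, fix ${\bf D}$ and suppose both $(m,{\bf f})$ and $(\tilde m,\tilde{\bf f})$ satisfy the defining relation; then $L(\tilde m{\bf f}-m\tilde{\bf f})=0$, so $\tilde m{\bf f}-m\tilde{\bf f}\in\ker L=\QQ\cdot{\bf 1}$ is an integer multiple of ${\bf 1}$. Pairing this against ${\bf D}'$ annihilates it because ${\bf D}'$ has zero coordinate sum, so $\frac{{\bf f}^T{\bf D}'}{m}=\frac{\tilde{\bf f}^T{\bf D}'}{\tilde m}$ holds \emph{exactly} in $\QQ$, with no reduction needed. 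For the choice of representative, replacing ${\bf D}$ by ${\bf D}+L{\bf g}$ with ${\bf g}\in\ZZ^n$ (representatives of a class differ by $\row_{\ZZ}L=L\ZZ^n$) changes $\frac{{\bf D}^T{\bf f}'}{m'}$ by $\frac{(L{\bf g})^T{\bf f}'}{m'}=\frac{{\bf g}^TL{\bf f}'}{m'}={\bf g}^T{\bf D}'\in\ZZ$, which vanishes in $\QQ/\ZZ$. Independence in the second argument follows by symmetry. Finally, bilinearity is immediate from the additive formula $\la[{\bf D}],[{\bf D}']\ra=\frac{{\bf D}^T{\bf f}'}{m'}$, which is $\ZZ$-linear in ${\bf D}$ for fixed ${\bf D}'$, and then $\ZZ$-linear in ${\bf D}'$ by symmetry.

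The computations here are routine; the only thing requiring care is the bookkeeping of the two ambiguities, which behave differently. The auxiliary-data ambiguity cancels on the nose, using $\ker L=\QQ\cdot{\bf 1}$ together with the zero-sum condition on ${\bf D}'$, whereas the representative ambiguity cancels only modulo $\ZZ$ — so the target $\QQ/\ZZ$ is exactly the right receptacle, and keeping track of which invariance is exact versus which holds only mod $\ZZ$ is the main (and essentially only) subtlety.
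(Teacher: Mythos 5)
Your proof is correct. Note that the paper itself does not prove this proposition at all---it simply cites \cite[Lemma~1.1]{BL_Pairing}---so your argument is a genuinely self-contained verification of the cited fact rather than a variant of anything in the text. Every step checks out: the equality of the three displayed expressions and the symmetry both follow from $L=L^T$ exactly as you say; the change-of-auxiliary-data invariance is exact in $\QQ$ because $L(\tilde m{\bf f}-m\tilde{\bf f})=0$ forces that vector into $\ker_{\QQ}L=\QQ\cdot{\bf 1}$ (here you are implicitly using that $G$ is connected, which the paper assumes throughout Section~\ref{sec:prelim}, so that $0$ is a simple eigenvalue), and the zero-sum condition on ${\bf D}'$ kills it; the change-of-representative ambiguity contributes ${\bf g}^T{\bf D}'\in\ZZ$ and so dies in $\QQ/\ZZ$; and bilinearity is read off from $\frac{{\bf D}^T{\bf f}'}{m'}$ plus symmetry. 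Your closing observation---that one invariance holds on the nose while the other holds only modulo $\ZZ$, which is precisely why the pairing lands in $\QQ/\ZZ$---is the right way to organize the bookkeeping, and is exactly the content of the reference's proof in the standard treatment.
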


Now, by choosing $m=m'=n\mu$ in Definition~\ref{def:pairing} and using (\ref{eq:SRG_decomp}), it is easy to compute the pairing between group elements of the form $E_{uv}:=[{\bf e}_u-{\bf e}_v]$ for edges $uv$ of $G$: $\langle E_{uv}, E_{xy}\rangle$ equals the coefficient of $L_x$ minus the coefficient of $L_y$ in (\ref{eq:SRG_decomp}), divided by $n\mu$.
Notice that the said coefficients only depend on the local information of adjacency between $u,v,x,y$ (which is not obvious as SRGs are not necessarily symmetric objects in general).
Since $E_{uv}$'s generate $\K(G)$ and the pairing is bilinear, the whole pairing can be described combinatorially.
We highlight two special instances of this calculation.

\begin{proposition} \label{prop:pairing}
Let $uv$ be an edge of $G$.
Then $\langle E_{uv}, E_{uv}\rangle=\frac{2(n-1)}{kn}\neq 0\in\mathbb{Q}/\mathbb{Z}$.
Let $xy$ be another edge that shares no common vertices with $uv$, and such that either $N(x)\cap\{u,v\}=N(y)\cap\{u,v\}$, or $u,v\in N(x),u,v\not\in N(y)$, or $u,v\not\in N(x),u,v\in N(y)$.
Then $\langle E_{uv}, E_{xy}\rangle=0$.
\end{proposition}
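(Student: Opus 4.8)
The plan is to specialize the general formula already established in the text, namely that $\langle E_{uv}, E_{xy}\rangle$ equals the coefficient of $L_x$ minus the coefficient of $L_y$ in the decomposition (\ref{eq:SRG_decomp}), divided by $n\mu$. The entire proposition is thus a bookkeeping exercise of reading off those coefficients, which were already tabulated in the four-case analysis inside the proof of Theorem~\ref{thm:main} for SRGs. So my first move would be to recall precisely what coefficient the row $L_z$ carries in (\ref{eq:SRG_decomp}) as a function of the adjacency type of $z$ with respect to $u$ and $v$: the coefficient is $(k+\mu-\lambda-1)$ for $z=u$, $-(k+\mu-\lambda-1)$ for $z=v$, $+1$ when $z\in N(u)\setminus(\{v\}\cup N(v))$, $-1$ when $z\in N(v)\setminus(\{u\}\cup N(u))$, and $0$ when $z\in N(u)\cap N(v)$ or when $z$ is adjacent to neither (for $z\in N(u)\cap N(v)$ the two sums contribute $+1$ and $-1$, cancelling). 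This table is the only ingredient needed.

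For the self-pairing $\langle E_{uv},E_{uv}\rangle$, I would take $x=u$ and $y=v$, so the value is the coefficient of $L_u$ minus the coefficient of $L_v$ in (\ref{eq:SRG_decomp}), divided by $n\mu$. From the table this is $\bigl[(k+\mu-\lambda-1)-(-(k+\mu-\lambda-1))\bigr]/(n\mu) = 2(k+\mu-\lambda-1)/(n\mu)$. The claimed answer is $\frac{2(n-1)}{kn}$, so I must show these two rationals are equal in $\mathbb{Q}/\mathbb{Z}$ — in fact I expect them equal in $\mathbb{Q}$. Using Proposition~\ref{prop:SRG_para} in the form $(n-k-1)\mu = k(k-\lambda-1)$, one rearranges to relate $k+\mu-\lambda-1$ to $(n-1)\mu/k$; concretely $k(k+\mu-\lambda-1) = k(k-\lambda-1)+k\mu = (n-k-1)\mu + k\mu = (n-1)\mu$, so $k+\mu-\lambda-1 = (n-1)\mu/k$, and substituting gives $2(n-1)\mu/(kn\mu) = 2(n-1)/(kn)$, as desired. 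The non-vanishing in $\mathbb{Q}/\mathbb{Z}$ then needs a short separate argument: since $\gcd(n-1,n)=1$, the element $\frac{2(n-1)}{kn}$ is zero in $\mathbb{Q}/\mathbb{Z}$ only if $kn \mid 2(n-1)$, which fails because $kn > 2(n-1)$ whenever $k\geq 2$ and $n\geq 3$ (the relevant range, as non-complete SRGs have $n>k+1\geq 3$), so the pairing is genuinely nonzero.

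For the cross term $\langle E_{uv}, E_{xy}\rangle$ with $xy$ an edge disjoint from $\{u,v\}$, I would simply run the three hypothesized adjacency patterns through the coefficient table and confirm that in each the coefficient of $L_x$ equals the coefficient of $L_y$, so their difference (and hence the pairing) is zero. If $N(x)\cap\{u,v\}=N(y)\cap\{u,v\}$, then $x$ and $y$ have identical adjacency types relative to $\{u,v\}$, so they receive identical coefficients and the difference vanishes. In the case $u,v\in N(x)$ but $u,v\notin N(y)$, the vertex $x$ lies in $N(u)\cap N(v)$ so its coefficient is $0$, while $y$ is adjacent to neither $u$ nor $v$ so its coefficient is also $0$; the symmetric case is identical. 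Hence in every listed configuration the difference of coefficients is $0$.

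The calculation is essentially routine once the coefficient table is in hand, so I do not anticipate a genuine obstacle; the only point requiring care is the non-vanishing claim $\langle E_{uv},E_{uv}\rangle\neq 0$ in $\mathbb{Q}/\mathbb{Z}$, where I must argue that the reduced fraction does not clear to an integer. Here the subtlety is that $\frac{2(n-1)}{kn}$ need not be in lowest terms, so I would reduce it properly — noting $\gcd(n-1,kn)$ divides $\gcd(n-1,k)$ since $\gcd(n-1,n)=1$ — and then verify that the denominator after reduction still exceeds $1$ for all non-complete, non-complete-bipartite SRG parameters in scope. A clean way to finish is to observe directly that $0 < \frac{2(n-1)}{kn} < 1$ for the admissible parameters (using $k\geq 2$ and $2(n-1)<kn$), which immediately forces the class to be a nonzero element of $\mathbb{Q}/\mathbb{Z}$.
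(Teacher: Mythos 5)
Your proposal is correct and follows essentially the same route as the paper: both read off the coefficients of $L_x$ and $L_y$ in (\ref{eq:SRG_decomp}), simplify $2(k+\mu-\lambda-1)/(n\mu)$ to $2(n-1)/(kn)$ via Proposition~\ref{prop:SRG_para}, observe that this lies strictly between $0$ and $1$ to get non-vanishing in $\mathbb{Q}/\mathbb{Z}$, and check the cross-term cases by matching coefficients. No gaps.
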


\begin{proof}
From the above discussion, $\langle E_{uv}, E_{uv}\rangle = \frac{2(k+\mu-\lambda-1)}{n\mu}=\frac{2}{n\mu}\cdot(\frac{(n-k-1)\mu}{k}+\mu)$ by Proposition~\ref{prop:SRG_para}, which further simplifies to $\frac{2(n-1)}{kn}$.
Since we have $n,k\geq 2$, $\frac{2(n-1)}{kn}$ is a rational number strictly between $0$ and $1$.

For the second assertion, note that in (\ref{eq:SRG_decomp}), the coefficient of $L_x$ for $x\neq u,v$ is 1 if and only if $N(x)\cap\{u,v\}=\{u\}$, $-1$ if and only if $N(x)\cap\{u,v\}=\{v\}$, and $0$ if and only if $N(x)\cap\{u,v\}=\{u,v\}$ or $\emptyset$.
\end{proof}

Recall that the {\em invariant factor decomposition} of $\K(G)$ (or any finite abelian group) is the decomposition $\K(G)\cong\oplus_{i=1}^d \mathbb{Z}/n_i\mathbb{Z}$, where $n_1>1, n_1\mid n_2\mid\ldots\mid n_d$.
Using the monodromy pairing, we can give a criterion that implies the existence of a large subgroup with few generators (or equivalently, that the sequence of invariant factors is tail-heavy) from the existence of a large subset of $E_{uv}$'s orthogonal with respect to $\langle\cdot,\cdot\rangle$.
A distinction of the criterion is that it provides finer information of $\K(G)$ than merely considering the basic parameters, while only requiring local structural information of $G$.

Before stating the precise statement, we first prove an elementary lemma on finite abelian groups that we cannot find a reference for; some ideas of the proof are also being used in the main theorem of this section.
Recall that for a finite abelian group $G$ and a natural number $n$, $G[n]:=\{g\in G: n\cdot g=0\}$ is a subgroup of $G$, and when $n$ is a prime, its size is $n^l$, where $l$ is the number of summands $\mathbb{Z}/m\mathbb{Z}$ in the invariant factor (or primary) decomposition of $G$ such that $n\mid m$.

\begin{lemma} \label{lem:ab_subgp}
Let $G\cong\oplus_{i=1}^d \mathbb{Z}/n_i\mathbb{Z}$ be the invariant factor decomposition of a finite abelian group $G$, and let $H$ be a subgroup of $G$ with $r$ summands in its invariant factor decomposition.
Then $r\leq d$, and if we index the invariant factor decomposition of $H$ as $H\cong\oplus_{i=d-r+1}^d \mathbb{Z}/n'_i\mathbb{Z}$, where $n'_{d-r+1}>1,n'_{d-r+1}\mid n'_{d-r+2}\mid\ldots\mid n'_d$, then we have $n'_i\mid n_i$ for $i=d-r+1,\ldots,d$.
\end{lemma}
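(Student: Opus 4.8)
The plan is to prove the two assertions one prime at a time and then reassemble. Since $n'_i \mid n_i$ holds if and only if $v_p(n'_i) \le v_p(n_i)$ for every prime $p$ (with $v_p$ the $p$-adic valuation), and since the $p$-primary part of $H$ is a subgroup of the $p$-primary part of $G$, it suffices to compare, for each fixed $p$, the sorted sequences of $p$-valuations of the invariant factors of $H$ and of $G$. The natural invariant to track is, for each $k \ge 1$, the integer $c_k(A) := \dim_{\mathbb{F}_p}\bigl(A[p^k]/A[p^{k-1}]\bigr)$ attached to a finite abelian group $A$. A one-line computation on a cyclic summand $\mathbb{Z}/p^a\mathbb{Z}$ shows that $c_k(A)$ equals the number of cyclic $p$-summands of $A$ of order at least $p^k$, equivalently the number of invariant factors of $A$ divisible by $p^k$; that is, $c_k(A)$ is the $k$-th part of the conjugate of the partition recording the $p$-valuations of the invariant factors of $A$.

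The key step is the monotonicity $c_k(H) \le c_k(G)$ for all $k$. Here the choice of $A[p^k]$ rather than $p^kA$ is essential: because $H \subseteq G$, one has $H[p^j] = H \cap G[p^j]$ for all $j$, whereas $p^j H$ is in general strictly smaller than $H \cap p^j G$. Consequently the obvious map $H[p^k]/H[p^{k-1}] \to G[p^k]/G[p^{k-1}]$ has kernel $(H[p^k] \cap G[p^{k-1}])/H[p^{k-1}] = H[p^{k-1}]/H[p^{k-1}] = 0$, so it is injective and $c_k(H) \le c_k(G)$ follows by comparing $\mathbb{F}_p$-dimensions. Taking $k=1$ and maximizing over $p$ already yields $r \le d$, since the minimal number of generators of a finite abelian group $A$ equals $\max_p c_1(A) = \max_p \dim_{\mathbb{F}_p} A[p]$.

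Finally I would translate the inequalities $c_k(H) \le c_k(G)$ (valid for every $k$ and every $p$) back into the desired top-aligned divisibilities. Fixing $p$ and writing the $p$-valuations of the invariant factors of $G$ and of $H$ as nondecreasing sequences indexed so that both top out at index $d$ (padding $H$'s sequence with zeros for indices below $d-r+1$), the inequalities $c_k(H)\le c_k(G)$ are exactly the statement that the conjugate partition of $H$'s sequence is contained in that of $G$'s; by the standard duality between containment of partitions and containment of their conjugates, this is equivalent to the entrywise inequalities $v_p(n'_i) \le v_p(n_i)$ for every $i$. Since this holds for all $p$, we recover $n'_i \mid n_i$ at each index. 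The main obstacle is organizational rather than conceptual: one must check that the top-alignment of the two invariant-factor sequences is compatible across all primes simultaneously, so that the prime-by-prime entrywise inequalities genuinely assemble into a single divisibility $n'_i \mid n_i$ at each fixed $i$. Keeping the indexing conventions straight, and correctly invoking the conjugate-partition equivalence for nondecreasing sequences, is where the care is required.
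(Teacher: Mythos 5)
Your proposal is correct and is in substance the same argument as the paper's: both reduce to the $p$-primary components and compare, for each $k$, the number of invariant factors of $H$ versus $G$ divisible by $p^k$, which is exactly the conjugate-partition inequality. The only difference is cosmetic --- the paper realizes this count as the $\mathbb{F}_p$-dimension of $(p^{m}A)[p]$ and derives a contradiction at a single index $j$ with $\beta_j>\alpha_j$, whereas you realize it as $\dim_{\mathbb{F}_p}\bigl(A[p^k]/A[p^{k-1}]\bigr)$, prove all the inequalities at once via the injections $H[p^k]/H[p^{k-1}]\hookrightarrow G[p^k]/G[p^{k-1}]$, and then invoke the duality between containment of partitions and of their conjugates.
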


\begin{proof}
It suffices to prove the statement for $p$-groups as we can localize the original problem for $H\leq G$ to their $p$-Sylow subgroups.
Write $G:=\oplus_{i=1}^d \mathbb{Z}/p^{\alpha_i}\mathbb{Z}$ ($0<\alpha_1\leq\ldots\leq\alpha_d$) and $H:=\oplus_{i=d-r+1}^d \mathbb{Z}/p^{\beta_i}\mathbb{Z}$ ($0<\beta_{d-r+1}\leq\ldots\leq\beta_d$).
In the case of $r>d$, we allow non-positive indexing in the invariant factor decompositions, and denote $\alpha_{d-r+1},\ldots,\alpha_0=0$ so that $G\cong\oplus_{i=d-r+1}^d \mathbb{Z}/p^{\alpha_i}\mathbb{Z}$.

Suppose $\beta_j>\alpha_j$ for some $j\geq d-r+1$ (which evidently happen if $r>d$), consider $p^{\alpha_j}H:=\{p^{\alpha_j}\cdot h:h\in H\}\cong\oplus_{i=d-r+1}^d \mathbb{Z}/p^{\max\{\beta_i-\alpha_j,0\}}\mathbb{Z}$ and $p^{\alpha_j}G$.
The subgroup $(p^{\alpha_j}H)[p]$ has at least $p^{d-j+1}$ elements, whereas $(p^{\alpha_j}G)[p]$ has at most $p^{d-j}$ elements, a contradiction to the fact that the former is a subgroup of the latter.
\end{proof}

For technical reasons, we switch the algebraic setting in the above proof to a slightly different one for the main theorem: for a finite abelian group $G$, $G[p]$ is equicardinal to $G\otimes_{\mathbb{Z}}\mathbb{F}_p$, and the exponent (base $p$) of its size is the dimension of the tensor product as a vector space over $\mathbb{F}_p$.

Write $\frac{2(n-1)}{kn}$ in its lowest term, and let $\eta$ denote the denominator of the resulting, or equivalent, $\eta := \frac{kn}{\gcd(2(n-1), kn)}$.
Since $n\mu\frac{2(n-1)}{kn}=\langle n\mu\cdot E_{uv},E_{uv}\rangle = 0\in \mathbb{Q}/\mathbb{Z}$,  we have $\eta\mid n\mu$.
On the other hand, $\eta\neq 1$ as $\frac{2(n-1)}{kn}\not\in\mathbb{Z}$.

\begin{theorem} \label{thm:tail_heavy}
Suppose there exist $E_1:=E_{u_1v_1},\ldots,E_r:=E_{u_rv_r}\in\K(G)$ whose pairwise pairings are zero (we shall call the corresponding subset of edges {\em orthogonal}).
Then $\K(G)$ contains a subgroup isomorphic to $\mathbb{Z}/n\mu\mathbb{Z}\oplus(\mathbb{Z}/\eta\mathbb{Z})^{r-1}$.
\end{theorem}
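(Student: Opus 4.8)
The plan is to analyze the subgroup $H := \langle E_1,\ldots,E_r\rangle \leq \K(G)$ directly through its presentation $\mathbb{Z}^r/K$, where $K$ is the lattice of relations, i.e. the kernel of the surjection $\mathbb{Z}^r \twoheadrightarrow H$ sending the $i$-th standard basis vector to $E_i$. Exhibiting a subgroup of $H$ of the desired shape suffices, since any subgroup of $H$ is a subgroup of $\K(G)$.

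The key first step is to show that every relation among the $E_i$ has coefficients divisible by $\eta$, that is, $K \subseteq \eta\mathbb{Z}^r$. Indeed, suppose $\sum_i c_i E_i = 0$ in $\K(G)$. Pairing with $E_j$ and invoking bilinearity together with the orthogonality hypothesis $\langle E_i, E_j\rangle = 0$ for $i\neq j$, all cross terms vanish and we are left with $c_j\langle E_j, E_j\rangle = 0 \in \mathbb{Q}/\mathbb{Z}$. By Proposition~\ref{prop:pairing} the self-pairing $\langle E_j, E_j\rangle = \frac{2(n-1)}{kn}$ is independent of $j$ and has order exactly $\eta$ in $\mathbb{Q}/\mathbb{Z}$, so $c_j\langle E_j, E_j\rangle = 0$ forces $\eta \mid c_j$. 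As $j$ was arbitrary, $K \subseteq \eta\mathbb{Z}^r$.

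From $K \subseteq \eta\mathbb{Z}^r$ I would read off the structure of $H$. Since $K \subseteq \eta\mathbb{Z}^r$, one has $\eta H = \eta\mathbb{Z}^r/K$, hence $H/\eta H \cong \mathbb{Z}^r/\eta\mathbb{Z}^r \cong (\mathbb{Z}/\eta\mathbb{Z})^r$. Writing the invariant factor decomposition $H \cong \bigoplus_{i=1}^d \mathbb{Z}/m_i\mathbb{Z}$ with $m_1 \mid \cdots \mid m_d$, the quotient $H/\eta H \cong \bigoplus_i \mathbb{Z}/\gcd(m_i,\eta)\mathbb{Z}$ is again in invariant-factor form; comparing with $(\mathbb{Z}/\eta\mathbb{Z})^r$ and using uniqueness of the decomposition shows that exactly $r$ of the $m_i$, necessarily the top ones $m_{d-r+1},\ldots,m_d$, are divisible by $\eta$. (Equivalently, and closer to the tensor formulation highlighted before the theorem, $\dim_{\mathbb{F}_p}(H \otimes_{\mathbb{Z}} \mathbb{F}_p) = r$ for every prime $p \mid \eta$.) Separately, since $E_1 \in H$ has order $n\mu$ by Theorem~\ref{thm:main}, the exponent of $H$, which is the top invariant factor $m_d$, is divisible by $n\mu$.

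It then remains to assemble the subgroup. Because $\eta \mid n\mu$, the two conditions are compatible: in $\bigoplus_{i=1}^d \mathbb{Z}/m_i\mathbb{Z}$ the summand $\mathbb{Z}/m_d\mathbb{Z}$ contains a cyclic subgroup of order $n\mu$, while each of the $r-1$ summands $\mathbb{Z}/m_{d-r+1}\mathbb{Z},\ldots,\mathbb{Z}/m_{d-1}\mathbb{Z}$ contains a cyclic subgroup of order $\eta$. Their internal direct sum is a subgroup of $H$, and hence of $\K(G)$, isomorphic to $\mathbb{Z}/n\mu\mathbb{Z} \oplus (\mathbb{Z}/\eta\mathbb{Z})^{r-1}$, as desired. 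I expect the only genuinely delicate point to be the first step, converting the orthogonality of the $E_i$ (a statement in $\mathbb{Q}/\mathbb{Z}$) into the integral divisibility constraint $K \subseteq \eta\mathbb{Z}^r$, which is exactly where the precise value and order of the self-pairing enter; the remaining steps are routine structure theory for finite abelian groups.
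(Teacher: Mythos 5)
Your proof is correct, and it takes a genuinely cleaner route than the paper's. The paper argues by induction on $l$, showing that $G_l:=\langle E_1,\ldots,E_l\rangle$ has exactly $l$ invariant factors, each divisible by $\eta$; the inductive step invokes Lemma~\ref{lem:ab_subgp}, localizes at a prime $p\mid\eta/\gcd(\eta,m_1)$, and runs a linear-independence argument in $(\gcd(\eta,m_1)G_l)\otimes\mathbb{F}_p$, where the independence is ultimately certified by pairing a purported dependence against each $E_j$. Your argument isolates that same pairing computation and applies it once, globally, to the relation lattice $K$ of the presentation $\mathbb{Z}^r\twoheadrightarrow H$: orthogonality plus the fact that $\langle E_j,E_j\rangle=\tfrac{2(n-1)}{kn}$ has order exactly $\eta$ in $\mathbb{Q}/\mathbb{Z}$ gives $K\subseteq\eta\mathbb{Z}^r$ directly, whence $H/\eta H\cong(\mathbb{Z}/\eta\mathbb{Z})^r$ and the divisibility of the top $r$ invariant factors of $H$ by $\eta$ is read off with no induction and no appeal to Lemma~\ref{lem:ab_subgp} (your count also forces $d=r$, since $H$ is $r$-generated). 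Both proofs hinge on the same mechanism --- pairing a relation against each $E_j$ to extract divisibility by $\eta$ --- but yours packages it more economically and, by exhibiting the desired subgroup inside $H$ itself, never needs to pass to the invariant factors of the ambient $\K(G)$. One small point to make explicit: the order of $E_1$ being exactly $n\mu$ is what the \emph{proof} of Theorem~\ref{thm:main} establishes for every edge of a non-exceptional SRG (the statement alone only bounds the order), and this is the same fact the paper uses for its base case $G_1\cong\mathbb{Z}/n\mu\mathbb{Z}$.
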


\begin{proof}
We prove by induction that the invariant factor decomposition of $G_l:=\langle E_1,\ldots,E_l\rangle$ has exactly $l$ summands, each of size divisible by $\eta$.
The base case is trivial as $G_1\cong\mathbb{Z}/n\mu\mathbb{Z}$.
By induction hypothesis, we can write $G_{l-1}\cong\oplus_{i=2}^l \mathbb{Z}/m'_i\mathbb{Z}$ such that $\eta\mid m'_2\mid m'_3\mid\ldots\mid m'_l$.
Since the number of invariant factors is the minimum number of generators necessary to generate the group, we can write $G_l\cong\oplus_{i=1}^l \mathbb{Z}/m_i\mathbb{Z}$ such that $m_1\mid\ldots\mid m_l$ and $m_2>1$, even though we cannot rule out the possibility of $m_1=1$ for now.
By Lemma~\ref{lem:ab_subgp}, $\eta\mid m'_i\mid m_i$ for all $i\geq 2$.

Suppose $\eta\nmid m_1$.
Pick a prime $p$ dividing $\frac{\eta}{\gcd(\eta,m_1)}$, and consider $\gcd(\eta,m_1) G_l\cong\oplus_{i=1}^l \mathbb{Z}/\frac{m_i}{\gcd(\eta,m_1)}\mathbb{Z}$.
Denote by $H$ the tensor product $(\gcd(\eta,m_1) G_l)\otimes\mathbb{F}_p$, viewed as a vector space over $\mathbb{F}_p$.
On one hand, since $p\nmid \frac{m_1}{\gcd(\eta,m_1)}$ but $p\mid\frac{\eta}{\gcd(\eta,m_1)}\mid \frac{m_i}{\gcd(\eta,m_1)}$ for all $i\geq 2$, we know that $\dim H=l-1$.
On the other hand, we claim that $(\gcd(\eta,m_1)\cdot E_1)\otimes 1,\ldots, (\gcd(\eta,m_1)\cdot E_l)\otimes 1\in H$ are linearly independent over $\mathbb{F}_p$, which would be a contradiction.

Suppose $\sum t_i[(\gcd(\eta,m_1)\cdot E_i)\otimes 1]=0$ for some $t_i$'s in $\mathbb{F}_p$.
By picking arbitrary representatives in $\mathbb{Z}$ and with a slight abuse of notation, we can rewrite the equation as $[\sum t_i\gcd(\eta,m_1)\cdot E_i]\otimes 1=0$, which implies $\sum t_i\gcd(\eta,m_1)\cdot E_i = p(\sum s_i\gcd(\eta,m_1)\cdot E_i)$ for some $s_i\in\mathbb{Z}$ --- recall that $\{\gcd(\eta,m_1)\cdot E_i: i = 1, \ldots, \ell\}$ generates $\gcd(\eta,m_1) G_l$ and that the kernel of $G\rightarrow G\otimes\mathbb{F}_p$ given by $x\rightarrow x\otimes 1$ is $pG$.
Computing the pairing between $\sum (t_i-ps_i)\gcd(\eta,m_1)\cdot E_i$ and each $E_j$ yields $\frac{2(n-1)(t_j-ps_j)\gcd(\eta,m_1)}{kn}\in\mathbb{Z}$, which is only possible if $\eta\mid (t_j-ps_j)\gcd(\eta,m_1)$, i.e., $\frac{\eta}{\gcd(\eta,m_1)}\mid (t_j-ps_j)$, this in turn shows that $p\mid t_j,\forall j$ as claimed.

Finally, apply Lemma~\ref{lem:ab_subgp} again to conclude that $\eta\mid n_i, \forall i\geq d-r+1$ in the invariant factor decomposition of $\K(G)$, together with the extra knowledge that $n_d=n\mu$ produces the subgroup isomorphic to $\mathbb{Z}/n\mu\mathbb{Z}\oplus(\mathbb{Z}/\eta\mathbb{Z})^{r-1}$ as desired.
\end{proof}

In general, $\eta$ can be less than $n\mu$: according to the online database of SRGs \cite{SRG_database}, the only feasible parameter tuples for SRGs with $\eta=n\mu$ and $n\leq 100$ are $(5,2,0,1),(35,18,9,9), (45,12,3,3), (85,20,3,5)$.
Moreover, as mentioned in Section~\ref{sec:review}, there are some previously known existence results for elements of certain order in the critical groups of SRGs, in which $\eta$ can be numerically smaller than those bounds.
Nevertheless, even in the those cases, the existence of elements of order $\eta$ could still be incomparable with the known results as the following example shows.

\begin{example} \label{ex:Cleb}
The complement $\overline{\rm Cleb}$ of the {\em Clebsch graph} is a $(16,10,6,6)$-SRG constructed as follows: the vertex set consists of all binary strings of length 4, and two strings are adjacent if they differ by exactly 2 or 3 digits.
The non-zero Laplacian eigenvalues of $\overline{\rm Cleb}$ are 8 and 12, with multiplicities 5 and 10, respectively.

By \cite[Corollary~3.2]{Lorenzini}, $\K(\overline{\rm Cleb})$ contains a subgroup isomorphic to $(\mathbb{Z}/8\mathbb{Z})^4$ and a subgroup isomorphic to $(\mathbb{Z}/12\mathbb{Z})^9$, hence one can deduce (and only deduce) that $\K(\overline{\rm Cleb})$ contains an element of order 24.
The same conclusion can be obtained from applying the results in \cite{DDEMPSV}.
More precisely, as the parameters of $\overline{\rm Cleb}$ do not satisfy the conditions in their Section~3 for $p=2$, the only applicable result is their general Lemma~2.1, which is of the same nature as \cite[Corollary~3.2]{Lorenzini}.

On the other hand, the two edges of $\overline{\rm Cleb}$ connecting $(0000), (0011)$ and $(1110),(1101)$, respectively, are orthogonal from Proposition~\ref{prop:pairing}.
Theorem~\ref{thm:tail_heavy} then implies the existence of some subgroup isomorphic to $\mathbb{Z}/96\mathbb{Z}\oplus\mathbb{Z}/16\mathbb{Z}$, in which even the existence of the second summand does not follow from the known results.

For reference, the critical group of $\overline{\rm Cleb}$ is isomorphic to $(\mathbb{Z}/3\mathbb{Z})\oplus(\mathbb{Z}/12\mathbb{Z})^4\oplus(\mathbb{Z}/24\mathbb{Z})\oplus(\mathbb{Z}/96\mathbb{Z})^4$.
\end{example}

We conclude this section with some brief discussion on how to look for a large orthogonal subset, and how the problem is related to the more classical structural questions concerning SRGs.

The study of cliques in SRGs, such as bounding the clique number of SRGs, is a standard topic in the subject \cite{GS_clique}.
For example, when the SRG is coming from a {\em partial geometry} \cite[Chapter~21]{ACIC}, every line in the geometry corresponds to a clique of the SRG.
More generally, bounding the maximum size of regular induced subgraphs of a SRG also attracts some attention \cite{Evans}.

It can be seen that these special subgraphs are sources of orthogonal subsets.
For cliques, any matching within a clique (or more generally, an induced disjoint union of cliques) is orthogonal.
On the other extreme, any induced matching as a $1$-regular induced subgraph of $G$ is orthogonal.
Other than regular subgraphs, the two disjoint edges in an induced {\em paw graph} ($K_{1,3}$ plus one more edge) are also orthogonal; this construction can be extended to any matching in an induced chain of $K_3$, i.e., an induced graph consisting of $p$ copies of edge-disjoint $K_3 = \{u_i,v_i,w_i\}$'s, where $w_i=u_{i+1}$ for every $i=1,\ldots,p-1$.

\section{Concluding Remarks}

It is a natural future direction to classify graphs with few (but more than two) non-zero Laplacian eigenvalues that achieve the bound in Theorem~\ref{thm:Lorenzini}, but it seems the complexity of the task explodes substantially.
For example, $K_{n_1,\ldots,n_r}$ has exactly $|\{n_1,\ldots,n_r\}|+1$ distinct non-zero Laplacian eigenvalues, but once $r\geq 3$, the exponent of $\K(K_{n_1,\ldots,n_r})$ depends on the number theoretic property of the $n_i$'s and their multiplicities \cite[Theorem~1]{JNR_Critical}.
This already makes the classification not entirely straightforward to state for complete multipartite graphs.

On the other hand, unlike the case of ordinary graphs, where the notion of strong regularity implies having exactly two distinct Laplacian eigenvalues, the suggested notions of {\em (very) strongly regular signed graphs} \cite{Stanic3, zaslavsky2} have regularities only slightly weaker than Proposition~\ref{prop:SG_para}, but they can have more than two distinct Laplacian eigenvalues.
So it would be interesting to explore their critical groups in future works.

Finally, we believe the approach in Section~\ref{sec:pairing} has much to be developed.
Starting with Theorem~\ref{thm:tail_heavy}, it might be possible to formulate similar/stronger algebraic statements based on other types of group elements and their corresponding graphical notions.
Turning to the combinatorial side, one could try to find more robust constructions of orthogonal subsets of edges (or other relevant graphical notions), hence derive other bounds on the maximum size of orthogonal subsets.
As a particular question, in almost all examples we know (other than the exceptional cases in Theorem~\ref{thm:main}, a few small cases, and the family of complete tripartite graphs $K_{m,m,m}$'s), the critical group of a $(n,k,\lambda,\mu)$-SRG contains a subgroup isomorphic to $(\mathbb{Z}/n\mu\mathbb{Z})^r$ for some $r>1$.
We ask whether that is a general phenomenon for SRGs (excluding a concrete list of exceptions), and whether some strengthening of our approach can prove it.

\section*{Acknowledgements}
Both KH and CHY are grateful to Diocesan Boys' School for forging this mathematical connection during their secondary school days.
KH contributed while affiliated with California Institute of Technology, and was supported by the Summer Research Undergraduate Fellowship 2013; he wishes to acknowledge the great mentorship of Prof.\ Mohamed Omar.
CHY extended the results of KH, and was supported by the Trond Mohn Foundation project ``Algebraic and Topological Cycles in Complex and Tropical Geometries'' at the University of Oslo.
CHY thanks Richard Wilson and his book {\em A Course in Combinatorics} for inspiring his interest on SRGs, Dino Lorenzini for pointing out \cite{Lorenzini}, Matt Baker for the discussion on monodromy pairings, and Jasper Lee and Henry Tsang for their encouragements.
Both authors thank the anonymous referee for the helpful comments, especially for suggesting them to look into the monodromy pairing and its potential applications.

\vspace{-2mm}

\bibliographystyle{plain}

\bibliography{SRG}

\begin{thebibliography}{10}

\bibitem{BS_Pairing}
Matthew Baker and Farbod Shokrieh.
\newblock Chip-firing games, potential theory on graphs, and spanning trees.
\newblock {\em J. Combin. Theory Ser. A}, 120(1):164--182, 2013.

\bibitem{BL_Pairing}
Siegfried Bosch and Dino Lorenzini.
\newblock Grothendieck's pairing on component groups of {J}acobians.
\newblock {\em Invent. Math.}, 148(2):353--396, 2002.

\bibitem{SRG_database}
Andries~E. Brouwer.
\newblock Parameters of strongly regular graphs.
\newblock Online database,
  \url{https://www.win.tue.nl/~aeb/graphs/srg/srgtab.html}.

\bibitem{CSX_Paley}
David~B. Chandler, Peter Sin, and Qing Xiang.
\newblock The {S}mith and critical groups of {P}aley graphs.
\newblock {\em J. Algebraic Combin.}, 41(4):1013--1022, 2015.

\bibitem{SandpileBook}
Scott Corry and David Perkinson.
\newblock {\em Divisors and sandpiles}.
\newblock American Mathematical Society, Providence, RI, 2018.

\bibitem{DDEMPSV}
Joshua~E. Ducey, David~L. Duncan, Wesley~J. Engelbrecht, Jawahar~V. Madan, Eric
  Piato, Christina~S. Shatford, and Angela Vichitbandha.
\newblock Critical group structure from the parameters of a strongly regular
  graph.
\newblock {\em J. Combin. Theory Ser. A}, 180:105424, 20, 2021.

\bibitem{DGW_Rook}
Joshua~E. Ducey, Jonathan Gerhard, and Noah Watson.
\newblock The {S}mith and critical groups of the square rook's graph and its
  complement.
\newblock {\em Electron. J. Combin.}, 23(4):Paper 4.9, 19, 2016.

\bibitem{Evans}
Rhys~J. Evans.
\newblock Bounds for regular induced subgraphs of strongly regular graphs,
  2022.
\newblock https://arxiv.org/abs/2202.03700.

\bibitem{rA}
Michael Faux and S.~James {Gates, Jr.}
\newblock Adinkras: A graphical technology for supersymmetric representation
  theory.
\newblock {\em Phys. Rev. D (3)}, 71:065002, 2005.

\bibitem{AGT}
Chris Godsil and Gordon Royle.
\newblock {\em Algebraic graph theory}, volume 207 of {\em Graduate Texts in
  Mathematics}.
\newblock Springer-Verlag, New York, 2001.

\bibitem{GS_clique}
Gary R.~W. Greaves and Leonard~H. Soicher.
\newblock On the clique number of a strongly regular graph.
\newblock {\em Electron. J. Combin.}, 25(4):Paper No. 4.15, 15, 2018.

\bibitem{harary}
Frank Harary.
\newblock On the notion of balance of a signed graph.
\newblock {\em Michigan Math. J.}, 2:143--146 (1955), 1953/54.

\bibitem{HouTangWang2}
Yaoping Hou, Zikai Tang, and Dijian Wang.
\newblock On signed graphs with just two distinct {L}aplacian eigenvalues.
\newblock {\em Appl. Math. Comput.}, 351:1--7, 2019.

\bibitem{Huang}
Hao Huang.
\newblock Induced subgraphs of hypercubes and a proof of the sensitivity
  conjecture.
\newblock {\em Ann. of Math. (2)}, 190(3):949--955, 2019.

\bibitem{Iga_Clifford}
Kevin Iga.
\newblock Adinkras: {G}raphs of {C}lifford {A}lgebra {R}epresentations,
  {S}upersymmetry, and {C}odes.
\newblock {\em Adv. Appl. Clifford Algebr.}, 31(5):Paper No. 76, 2021.

\bibitem{IKKY}
Kevin Iga, Caroline Klivans, Jordan Kostiuk, and Chi~Ho Yuen.
\newblock Eigenvalues and critical groups of {A}dinkras, 2022.
\newblock https://arxiv.org/abs/2202.02821.

\bibitem{JNR_Critical}
Brian Jacobson, Andrew Niedermaier, and Victor Reiner.
\newblock Critical groups for complete multipartite graphs and {C}artesian
  products of complete graphs.
\newblock {\em J. Graph Theory}, 44(3):231--250, 2003.

\bibitem{ChipBook}
Caroline~J. Klivans.
\newblock {\em The mathematics of chip-firing}.
\newblock Discrete Mathematics and its Applications (Boca Raton). CRC Press,
  Boca Raton, FL, 2019.

\bibitem{LZ_Prym}
Yoav Len and Dmitry Zakharov.
\newblock Kirchhoff's theorem for {P}rym varieties.
\newblock {\em Forum Math. Sigma}, 10:Paper No. e11, 2022.

\bibitem{Lorenzini}
Dino Lorenzini.
\newblock Smith normal form and {L}aplacians.
\newblock {\em J. Combin. Theory Ser. B}, 98(6):1271--1300, 2008.

\bibitem{Lorenzini2}
Dino~J. Lorenzini.
\newblock A finite group attached to the {L}aplacian of a graph.
\newblock {\em Discrete Math.}, 91(3):277--282, 1991.

\bibitem{TS_Polar}
Venkata Raghu~Tej Pantangi and Peter Sin.
\newblock Smith and critical groups of polar graphs.
\newblock {\em J. Combin. Theory Ser. A}, 167:460--498, 2019.

\bibitem{RT_Signed}
Victor Reiner and Dennis Tseng.
\newblock Critical groups of covering, voltage and signed graphs.
\newblock {\em Discrete Math.}, 318:10--40, 2014.

\bibitem{Stanic3}
Zoran Stani\'{c}.
\newblock On strongly regular signed graphs.
\newblock {\em Discrete Appl. Math.}, 271:184--190, 2019.

\bibitem{Stanic2}
Zoran Stani\'{c}.
\newblock Spectra of signed graphs with two eigenvalues.
\newblock {\em Appl. Math. Comput.}, 364:124627, 9, 2020.

\bibitem{DH_2LE}
Edwin~R. van Dam and Willem~H. Haemers.
\newblock Graphs with constant {$\mu$} and {$\overline\mu$}.
\newblock volume 182, pages 293--307. 1998.
\newblock Graph theory (Lake Bled, 1995).

\bibitem{ACIC}
J.~H. van Lint and R.~M. Wilson.
\newblock {\em A course in combinatorics}.
\newblock Cambridge University Press, Cambridge, second edition, 2001.

\bibitem{zaslavsky}
Thomas Zaslavsky.
\newblock Signed graphs.
\newblock {\em Discrete Applied Mathematics}, 4:47--74, 1982.

\bibitem{zaslavsky2}
Thomas Zaslavsky.
\newblock Matrices in the theory of signed simple graphs.
\newblock In {\em Advances in discrete mathematics and applications: {M}ysore,
  2008}, volume~13 of {\em Ramanujan Math. Soc. Lect. Notes Ser.}, pages
  207--229. Ramanujan Math. Soc., Mysore, 2010.

\end{thebibliography}

\end{document}